\documentclass[3p,times,twocolumn]{elsarticle}   

\usepackage[utf8]{inputenc}
\usepackage{graphicx}          
\graphicspath{ {Figures_bnd_wave/} }
\usepackage{amsmath}
\usepackage{natbib}
\usepackage{amsfonts}
\usepackage{color}
\usepackage{amssymb}
\usepackage{mathrsfs}
\usepackage{float}
\usepackage{enumerate}
\usepackage{verbatim}
\usepackage{setspace}
\usepackage{epsfig}
\usepackage{url}
\usepackage{graphicx}
\usepackage{lscape}
\usepackage{subcaption}
\usepackage{caption}
\usepackage{bbm}
\usepackage{arydshln}




\numberwithin{equation}{section}
\newtheorem{remark}{Remark}[section]

\newtheorem{theorem}{Theorem}[section]

\newtheorem{lemma}{Lemma}[section]

\newenvironment{proof}{{\textbf{Proof}:~}}{\hfill$\Box$\\}

\begin{document}

\begin{frontmatter}
\title{Global  finite-dimensional observer-based stabilization of a semilinear heat equation with large input delay\tnoteref{t1}}

\tnotetext[t1]{Supported by Israel Science Foundation (grant  673/19), the C. and H. Manderman Chair 
at Tel Aviv University and by the Y. and C. Weinstein Research
Institute for Signal Processing }
\author[Tel-Aviv university]{Rami Katz}\ead{rami@benis.co.il}
\author[Tel-Aviv university]{Emilia Fridman}\ead{emilia@eng.tau.ac.il}
\address[Tel-Aviv university]{School of Electrical Engineering, Tel-Aviv University, Tel-Aviv}  
%
%
\begin{keyword}                           
Nonlinear systems, distributed parameter systems, 
time-delay systems, observer-based control.           
\end{keyword}                             
%

\begin{abstract}
We study global  finite-dimensional observer-based  stabilization of a semilinear 1D heat equation with globally Lipschitz semilinearity in the state variable.
We consider Neumann actuation and point measurement.
Using dynamic extension and modal decomposition we derive nonlinear ODEs for the modes of the state.
We propose a controller that is based on a nonlinear finite-dimensional Luenberger observer. Our Lypunov $H^1$-stability analysis leads to LMIs, which are shown to be feasible for a large enough observer dimension and small enough Lipschitz constant. Next, we consider the case of  a constant input delay $r>0$. To compensate the delay, we introduce a chain of $M$ sub-predictors that leads to a nonlinear closed-loop ODE system, coupled with nonlinear infinite-dimensional tail ODEs. We provide LMIs for $H^1$-stability  and prove that for any $r>0$, the LMIs are feasible provided $M$ and $N$ are large enough and the Lipschitz constant is small enough. Numerical examples demonstrate the efficiency of the proposed approach.
\end{abstract}
\end{frontmatter}

\section{Introduction}
Observer-based control
of parabolic PDEs is a challenging problem with numerous applications, including  chemical reactors, flame propagation and viscous flow
\cite{christofides2001}.
Output-feedback controllers for  PDEs have been constructed by the modal decomposition approach \cite{curtain1982finite,lasiecka2000control,orlov2004robust}, the backstepping method \cite{Krstic2008} and the spatial decomposition approach \cite{Aut12,kang2020constrained}. Constructive finite-dimensional observer-based design for linear 1D parabolic PDEs was introduced in \cite{RamiContructiveFiniteDim,katz2021finiteKSE}, via modal decomposition.
 The challenging problem of efficient finite-dimensional observer-based design for semilinear parabolic PDEs remained open.

State-feedback control of some semilinear PDEs was studied in \cite{vazquez2008control1} using backstepping,  in \cite{karafyllis2019small} using small-gain theorem and in \cite{karafyllis2021lyapunov} via control Lyapunov functions.
Recently, modal-decomposition-based state-feedback was proposed in \cite{Nonl_Aut21} for global stabilization of heat equation and in  \cite{katz2021finiteKSE}
for regional stabilization of  Kuramoto-Sivashinsky equation.   Finite-dimensional control based on linear observers was proposed in \cite{wu2016finite} for semilinear parabolic PDEs via modal decomposition.
Linear observers should have high gains required to dominate the nonlinearity, which leads to small delays  that preserve the stability  \cite{lei2016highRami,najafi2021decreaseRami}.

For  ODEs, compensation of input delay can be achieved using three main predictor methods: the classical predictor 
\cite{Artstein82}, the PDE-based prtedictor \cite{Krstic09} and sequential sub-predictors (observers of the future state) \cite{
najafi2013closed1}. 
For  delay compensation of input/output delays in the case of
 nonlinear ODEs see e.g. \cite{Ahmed-ali2012,bekiaris2013nonlinear,bresch2015prediction,cacace2016stabilizationRami,germani2002new1,karafyllis2017predictor,mazenc2016stabilization} and the references therein).
 For semilinear heat equation, by using spatial decomposition, a chain of PDE observers (to compensate output delay) was suggested in \cite{ahmed2019observer}. For linear  heat equation,
 a classical state-feedback predictor via modal decomposition was
  proposed in \cite{prieur2018feedback}, whereas
  a sub-predictor 
  based on PDE observer  was suggested in \cite{selivanov2018delayed}.
For linear parabolic PDEs, finite-dimensional observe-based classical predictors and 
sub-predictors  were introduced  in \cite{katz2021sub}.

For semilinear parabolic PDEs, efficient finite-dimensional observer-based controller design as well as input delay compensation
remained  open challenging problems that we solve in the present paper.
We consider global stabilization of a semilinear heat equation under Neumann actuation and point measurement.
 The semilinarity is assumed to be globally Lipschitz in the state. Using dynamic extension and modal decomposition we derive nonlinear ODEs for the modes of the state.  
 We design a linear controller, which is based on finite-dimensional \emph{nonlinear} observer. 
 The challenge in the Lyapunov-based analysis is due to the coupling between the finite-dimensional and infinite-dimensional parts of the closed-loop system, introduced by both the semilinearity and the estimation error.
 Our  $H^1$-stability analysis leads to LMIs, which are shown to be feasible for a large enough observer dimension and small enough Lipschitz constant.

  We further consider the case of constant input delay $r>0$ and suggest compensating the delay using chain of $M$ sub-predictors - observers of the future state. 
 We
 introduce an approximate nonlinearity into the sub-predictor ODEs and provide $H^1$-stability analysis, where the difference between the approximate nonlinearity and the actual nonlinearity is estimated using the sub-predictor estimation error. We  prove that for any $r>0$, the  LMIs for the stability analysis are feasible provided $M$ and $N$ are large enough and the Lipschitz constant is small enough. Numerical examples demonstrate the efficiency of the proposed approach. 

\textit{Notations and preliminaries:} $ L^2(0,1)$ is the Hilbert space of Lebesgue measurable and square integrable functions $f:[0,1]\to \mathbb{R} $  with the inner product $\left< f,g\right>:=\scriptsize{\int_0^1 f(x)g(x)dx}$ and induced norm $\left\|f \right\|^2:=\left<f,f \right>$.
$H^{k}(0,1)$ is the Sobolev space of  functions  $f:[0,1]\to \mathbb{R} $ having $k$ square integrable weak derivatives, with the norm $\left\|f \right\|^2_{H^k}:=\sum_{j=0}^{k} \left\|f^{(j)} \right\|^2$. The Euclidean norm on $\mathbb{R}^n$ is denoted by $\left|\cdot \right|$. We write $f\in H^1_0(0,1)$ if $f\in H^1(0,1)$ and $f(0)=f(1)=0$. For  $P \in \mathbb{R}^{n \times n}$, $P>0$ means that $P$ is symmetric and positive definite.
The sub-diagonal elements of a symmetric matrix will be denoted by $*.$ For $0<U\in \mathbb{R}^{n\times n}$ and $x\in \mathbb{R}^n$ we denote $\left|x\right|^2_U=x^TUx$. $\mathbb{Z}_+$ denotes the nonnegative integers.

Consider the Sturm-Liouville eigenvalue problem
\begin{equation}\label{eq:SL}
	\begin{aligned}
		\phi''+\lambda \phi = 0,\ \ x\in (0,1)
	\end{aligned}
\end{equation}
with boundary conditions
\begin{equation}\label{eq:2BCs}
	\begin{array}{lll}
		& \phi'(0) = \phi'(1) = 0.
	\end{array}
\end{equation}
This problem induces a sequence of eigenvalues with corresponding eigenfunctions. The normalized eigenfunctions form a complete orthonormal system in $L^2(0,1)$. The eigenvalues and corresponding eigenfunctions are given by
\begin{equation}\label{eq:SLBCs}
	\begin{array}{lll}
		&\hspace{-5mm}\phi_0(x) \equiv 1,\ \phi_n(x)=\sqrt{2}\cos\left(\sqrt{\lambda_n} x\right),\ \lambda_n = n^2\pi^2, \ n\in \mathbb{Z}_+.
	\end{array}
\end{equation}
The following lemmas will be used:
\begin{lemma}\label{lem:H1Equiv1}  
Let $h \overset{L^2}{=} \sum_{n=0}^{\infty}h_n\phi_n$. Then $h\in H^2(0,1)$ with $h'(0)=h'(1)=0$ if and only if
$\sum_{n=1}^{\infty}\lambda_n^2h_n^2< \infty$.  Moreover,
\begin{equation}\label{eq:H2}
\left\|h' \right\|^2=\sum_{n=1}^{\infty}\lambda_nh_n^2.
\end{equation}
\end{lemma}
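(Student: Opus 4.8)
The plan is to prove the equivalence in both directions and to read off the identity \eqref{eq:H2} along the way, using three ingredients: integration by parts against the eigenfunctions, Parseval's identity in the complete orthonormal basis $\{\phi_n\}$, and (for the nontrivial direction) an approximation argument through the partial sums together with the one-dimensional Sobolev embedding. As a preliminary I would establish \eqref{eq:H2} for \emph{every} $h\in H^1(0,1)$. Introduce the Dirichlet system $\psi_n(x):=\sqrt2\,\sin(\sqrt{\lambda_n}\,x)=\sqrt2\,\sin(n\pi x)$, $n\ge1$, which is a complete orthonormal basis of $L^2(0,1)$ and satisfies $\phi_n'=-\sqrt{\lambda_n}\,\psi_n$. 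Since $h\in H^1(0,1)$ is absolutely continuous, integration by parts gives
\[
\langle h',\psi_n\rangle=\big[\sqrt2\,h\,\sin(n\pi x)\big]_0^1-\sqrt{\lambda_n}\,\langle h,\phi_n\rangle=-\sqrt{\lambda_n}\,h_n ,
\]
the boundary term vanishing because $\sin(n\pi x)=0$ at $x=0,1$. Parseval's identity for $\{\psi_n\}$ then yields $\|h'\|^2=\sum_{n=1}^\infty\lambda_n h_n^2$, i.e. \eqref{eq:H2}.

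For the forward implication, assume $h\in H^2(0,1)$ with $h'(0)=h'(1)=0$, so $h''\in L^2(0,1)$. Two integrations by parts against $\phi_n$ --- the first using $h'(0)=h'(1)=0$, the second using $\phi_n'(0)=\phi_n'(1)=0$ and $\phi_n''=-\lambda_n\phi_n$ --- give $\langle h'',\phi_n\rangle=-\lambda_n h_n$. Parseval in $\{\phi_n\}$ then produces $\|h''\|^2=\sum_{n=1}^\infty\lambda_n^2 h_n^2<\infty$ (the $n=0$ term disappears as $\lambda_0=0$), which is exactly the stated summability.

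The hard part is the converse. Assuming $\sum_{n\ge1}\lambda_n^2h_n^2<\infty$, I would work with the partial sums $S_N:=\sum_{n=0}^N h_n\phi_n$. Because $\lambda_n=n^2\pi^2\ge1$ for $n\ge1$, the bound $\lambda_n\le\lambda_n^2$ makes both $\sum\lambda_n h_n^2$ and $\sum\lambda_n^2 h_n^2$ finite, so the three sequences $S_N\to h$, $\,S_N'=-\sum_{n=1}^N\sqrt{\lambda_n}\,h_n\psi_n$ and $S_N''=-\sum_{n=1}^N\lambda_n h_n\phi_n$ are all Cauchy in $L^2(0,1)$; call the limits of the last two $w$ and $g$. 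Testing $\int_0^1 S_N\varphi''=\int_0^1 S_N''\varphi$ against $\varphi\in C_c^\infty(0,1)$ and letting $N\to\infty$ identifies $g$ as the weak second derivative of $h$, so $h\in H^2(0,1)$, $h'=w$, and $\|h'\|^2=\sum\lambda_n h_n^2$ once more. To recover the boundary conditions I would note that $S_N'\to h'$ holds in $H^1(0,1)$ (both $S_N'\to w=h'$ and $S_N''\to g=h''$ in $L^2$), whence, by $H^1(0,1)\hookrightarrow C[0,1]$, the convergence is uniform; since $\phi_n'(0)=\phi_n'(1)=0$ forces $S_N'(0)=S_N'(1)=0$ for every $N$, continuity of point evaluation gives $h'(0)=h'(1)=0$.

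I expect the only genuine obstacle to be this final step: upgrading the merely $L^2$ convergence of the eigenfunction series to the pointwise boundary identities. The resolution is to route the argument through $H^1$-convergence and the one-dimensional Sobolev embedding into $C[0,1]$, rather than attempting to read off boundary values from the $L^2$ expansion directly.
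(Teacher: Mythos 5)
Your proof is correct and complete: the derivative identity obtained by expanding $h'$ in the Dirichlet sine basis, the two integrations by parts for the forward implication, and the partial-sum argument with $H^1(0,1)\hookrightarrow C[0,1]$ for the converse (including the recovery of the Neumann boundary values $h'(0)=h'(1)=0$, which is indeed the only delicate point) all go through. The paper states this lemma without proof, as a standard spectral characterization of the domain of the Neumann Laplacian, so there is no alternative argument to compare against; yours is the canonical one, and your version of \eqref{eq:H2} for all $h\in H^1(0,1)$ is in fact slightly more general than needed and covers the way the identity is used later (e.g.\ in \eqref{eq:ZetaEst}).
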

\begin{lemma}\label{lem_sob}
(Sobolev's inequality \cite{kang2019distributed})
	Let	$h\in H^1(0,1)$. Then, for all $\Gamma>0$ :
	\begin{equation}\label{eq:Sob}
	    max_{x\in[0,1]}\left|h(x) \right|^2\leq (1+\Gamma)\left\|h\right\|^2+ \Gamma^{-1}\left\|h' \right\|^2.
	\end{equation}
\end{lemma}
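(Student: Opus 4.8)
The plan is to derive a pointwise representation of $h^2(x)$ via the fundamental theorem of calculus and then split the resulting cross term with Young's inequality, whose free parameter will become $\Gamma$. Since $h\in H^1(0,1)$ admits an absolutely continuous representative, for any $x,y\in[0,1]$ we may write
\begin{equation}
h^2(x)=h^2(y)+2\int_y^x h(s)h'(s)\,ds.
\end{equation}
The key idea is to eliminate the free point $y$ by integrating this identity over $y\in[0,1]$; because $\int_0^1 h^2(y)\,dy=\left\|h\right\|^2$, this turns the first term on the right-hand side into exactly $\left\|h\right\|^2$, which is where the $L^2$-norm of $h$ enters.

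Next I would bound the double integral uniformly in $x$. For every $y$ the inner integral is taken over a subinterval of $[0,1]$, so $\left|\int_y^x h h'\,ds\right|\le \int_0^1 \left|h\right|\left|h'\right|\,ds$; integrating this constant bound over $y\in[0,1]$ leaves $2\int_0^1\left|h(s)\right|\left|h'(s)\right|\,ds$ with no spurious constant. Applying Young's inequality in the form $2\left|h\right|\left|h'\right|\le \Gamma h^2+\Gamma^{-1}(h')^2$ for the prescribed $\Gamma>0$ and integrating gives $\Gamma\left\|h\right\|^2+\Gamma^{-1}\left\|h'\right\|^2$. Adding the $\left\|h\right\|^2$ term from the first step yields $h^2(x)\le (1+\Gamma)\left\|h\right\|^2+\Gamma^{-1}\left\|h'\right\|^2$ for every $x$, and taking the maximum over $x\in[0,1]$ completes the argument.

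The step requiring the most care is the bookkeeping of constants, so that the final bound matches the stated coefficients $(1+\Gamma)$ and $\Gamma^{-1}$ exactly: in particular, averaging the representation over $y$, rather than evaluating it at a single point such as $y=0$, is what produces the clean $\left\|h\right\|^2$ term and avoids an unwanted factor, while the choice of Young's inequality with the same $\Gamma$ weighting both terms is what splits the cross term into precisely these two pieces. A minor technical point worth flagging is that the identity above is first justified for $C^1$ functions and then extended to all of $H^1(0,1)$ by density, or equivalently by the absolute continuity of $H^1$ functions on a bounded interval, which also guarantees that the pointwise maximum on the left-hand side is well defined.
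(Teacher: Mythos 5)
Your proof is correct. The paper does not prove this lemma at all --- it simply cites it from the reference \cite{kang2019distributed} --- so there is no in-paper argument to compare against; your derivation (write $h^2(x)=h^2(y)+2\int_y^x h h'\,ds$ for the absolutely continuous representative, average over $y\in[0,1]$ to produce the clean $\left\|h\right\|^2$ term, bound the cross term by $2\int_0^1\left|h\right|\left|h'\right|\,ds$, and split it with Young's inequality weighted by $\Gamma$) is the standard proof of this inequality, and the constants $(1+\Gamma)$ and $\Gamma^{-1}$ come out exactly as stated.
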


\section{Finite-dimensional observer-based control of a non-delayed semilinear heat equation}\label{sec_ObsContSemilinNodel}

\subsection{Problem formulation and controller deign}
In this section we consider stabilization of the non-delayed semilinear 1D heat equation
\begin{equation}\label{eq:SemilinearHeatNoDel}
\begin{array}{lll}
&z_t(x,t) = z_{xx}(x,t) +g\left(t,x,z(x,t)\right), \ t\geq 0
\end{array}
\end{equation}
where $x\in [0,1]$, $z(x,t)\in \mathbb{R}$ and $a\in \mathbb{R}$ is the reaction coefficient. We consider Neumann actuation
\begin{equation}\label{eq:SemiBCsHeatDir}
z_x(0,t) = 0, \quad z_x(1,t) = u(t)
\end{equation}
where $u(t)$ is a control input to be designed. We further assume point measurement given by
\begin{equation}\label{eq:SemiInDomPointMeasHeat}
y(t) = z(x_*,t), \quad x_*\in [0,1].
\end{equation}
Note that $x_*=0$ or $x_*=1$ correspond to boundary measurements. Here $g:\mathbb{R}^3 \to \mathbb{R}$ is a locally Lipschitz function which satisfies $g(t,x,0) \equiv0$  and
\begin{equation}\label{eq:NonLinHeatVectfNoDel}
\begin{array}{lll}
&\sup_{z_1\neq z_2}\frac{\left|g(t,x,z_1)-g(t,x,z_2) \right|}{\left|z_1-z_2 \right|}\leq \sigma, \ \forall \left(t,x\right)\in \mathbb{R}^2
\end{array}
\end{equation}
for some $\sigma>0$, independent of $\left(t,x\right)\in \mathbb{R}^2$.

\begin{remark}
	For simplicity, in the present paper  we consider a reaction-diffusion PDE with constant diffusion and reaction coefficients. As in \cite{RamiContructiveFiniteDim}, our results can be easily extended to the more general reaction-diffusion PDE
	\begin{equation*}
	\begin{array}{lll}
	&z_t(x,t) = \partial_x\left(p(x)z_x(x,t)\right)+q(x)z(x,t)\\
	&\hspace{12mm}+g(t,x,z(x,t)),\quad x\in[0,1], \ t\ge 0,
	\end{array}
	\end{equation*}
	where $p(x)$ and $q(x)$ are sufficiently smooth on $(0,1)$.
\end{remark}

Let $\psi(x) = -\frac{2}{\pi}\cos\left(\frac{\pi}{2}x\right)$ and note that it satisfies
\begin{equation}\label{eq:PsiProp}
\begin{array}{lll}
    & \psi''(x) = -\mu \psi(x), \quad \mu = \frac{\pi^2}{4},\\
    & \psi'(0) = 0 , \ \psi'(1) = 1, \ \left\|\psi \right\|^2 = \frac{2}{\pi^2}.
\end{array}
\end{equation}
Forthermore, note that
\begin{equation}\label{eq:PsiProp1}
\begin{array}{lll}
& \left<\psi,\phi_0 \right> = \int_0^1\psi(x)dx = \frac{4}{\pi^2},\\
& \left<\psi,\phi_n \right> = -\frac{1}{\lambda_n}\int_0^1\psi(x)\phi_n''(x)dx = \frac{1}{\lambda_n}\phi'_n(1)\\
&-\frac{1}{\lambda_n}\int_0^1\psi''(x)\phi_n(x)dx= \frac{\sqrt{2}(-1)^{n}}{\lambda_n}+\frac{\mu}{\lambda_n}\left<\psi,\phi_n \right>, \ n\geq 1.
\end{array}
\end{equation}
Similar to  \cite{karafyllis2021lyapunov}, we introduce the change of variables
\begin{equation}\label{eq:SemiChangeVarsHeatDirNoDel}
w(x,t)=z(x,t)-\psi(x)u(t),
\end{equation}
to obtain the equivalent PDE
\begin{equation}\label{eq:wnodel}
\begin{array}{lll}
& w_t(x,t)=w_{xx}(x,t)+g\left(t,x,w(x,t)+\psi(x)u(t) \right)\\
&\hspace{12mm} -\psi(x)[\dot{u}(t) +\mu u(t)]
\end{array}
\end{equation}
with
\begin{equation}\label{eq:SemiBCsHeatDir1}
w_x(0,t) = w_x(1,t) = 0
\end{equation}
and measurement
\begin{equation}\label{eq:SemiInDomPointMeas1HeatDirNoDel}
y(t) = w(x_*,t)+\psi(x_*)u(t).
\end{equation}
 We define further the new control input $v(t)$ that satisfies the following relations:
$$\dot{u}(t) = -\mu u(t) + v(t),\quad u(0)=0, \quad t\ge 0.$$
 Then \eqref{eq:wnodel} can be presented as the 
 ODE-PDE system 
\begin{equation}\label{eq:SemiPDE1HeatNeuSampNoDel}
\begin{array}{lll}
& \dot{u}(t) = -\mu u(t) + v(t),\quad t\ge 0, \\
& w_t(x,t)=w_{xx}(x,t)+g\left(t,x,w(x,t)+\psi(x)u(t) \right)\\
&\hspace{12mm} -\psi(x)v(t).
\end{array}
\end{equation}
We will  treat further $u(t)$ as an additional state variable.  

We present the solution to \eqref{eq:SemiPDE1HeatNeuSampNoDel} as
\begin{equation}\label{eq:SemiWseriesHeatDirNoDel}
\begin{array}{lll}
w(x,t) &= \sum_{n=0}^{\infty}w_n(t)\phi_n(x), \ w_n(t) =\left<w(\cdot,t),\phi_n\right>,
\end{array}
\end{equation}
with $\left\{\phi_n\right\}_{n=0}^{\infty}$ defined in \eqref{eq:SLBCs}. By differentiating under the integral sign, integrating by parts and using \eqref{eq:SL} and \eqref{eq:2BCs} we obtain for $t\geq 0$
\begin{equation}\label{eq:SemiWOdesHeatDirNoDel}
\begin{array}{lll}
&\dot{w}_n(t) = -\lambda_nw_n(t) + g_n(t)+ b_nv(t), \\
&w_n(0) = \left<w(\cdot,0),\phi_n\right>,
\end{array}
\end{equation}
where
\begin{equation}\label{eq:gnDef}
\begin{array}{lll}
&g_n(t) = \left<g\left(t,\cdot,w(\cdot,t)+\psi(\cdot)u(t) \right),\phi_n \right>,\\
& b_0 \overset{\eqref{eq:PsiProp1}}{=}\frac{4}{\pi^2},\quad b_n = \overset{\eqref{eq:PsiProp1}}{=}\frac{(-1)^{n+1}4\sqrt{2}}{\pi^2\left(4n^2-1\right)}, \ n\geq 1.
\end{array}
\end{equation}
Note that given $N\in \mathbb{Z}_+$, \eqref{eq:gnDef} and the integral test for series convergence imply
\begin{equation}\label{eq:bSeriesNoDel}
\begin{array}{lll}
&\sum_{n=N+1}^{\infty}\lambda_nb_n^2 = \frac{32}{\pi^2}\sum_{n=N+1}^{\infty}\frac{n^2}{(4n^2-1)^2}\\
& = \frac{2}{\pi^2}\sum_{n=N+1}^{\infty}\frac{1}{n^2}\left(1+\frac{1}{4n^2-1} \right)^2\leq \frac{2\xi_{N+1}}{\pi^2},\\
&\xi_{N+1} = \left(1+\frac{1}{4(N+1)^2-1} \right)^2\frac{1}{N}.
\end{array}
\end{equation}

Let $\delta>0$ be a desired decay rate and let $N_0 \in \mathbb{Z}_+$ satisfy
\begin{equation}\label{eq:SemiN0HeatDirNoDel}
-\lambda_n+\sigma<-\delta, \quad n>N_0.
\end{equation}
$N_0$ is the number of modes in our controller, whereas  $N\in\mathbb{Z}_+, \ N\geq N_0$ is the  observer dimension.
We construct a finite-dimensional observer of the form
\begin{equation}\label{eq:ObsNoDel}
\begin{array}{lll}
\hat{w}(x,t) = \sum_{n=0}^N\hat{w}_n(t)\phi_n(x)
\end{array}
\end{equation}
where $\left\{\hat{w}_n(t) \right\}_{n=0}^N$ satisfy the \emph{nonlinear} ODEs
\begin{equation}\label{eq:ObsNoDel1}
    \begin{array}{lll}
    &\dot{\hat{w}}_n(t) = -\lambda_n\hat{w}_n(t) + \hat{g}_n(t)+ b_nv(t)\\
    & -l_n\left[\hat{w}(x_*,t)+\psi(x_*)u(t) - y(t) \right], \ 0\leq n \leq N
    \end{array}
\end{equation}
with scalar observer gains $\left\{l_n \right\}_{n=0}^N$ and
\begin{equation}\label{eq:ObsNoDel2}
\begin{array}{lll}
\hat{g}_n(t) = \left <g\left(t,\cdot,\hat{w}(\cdot,t)+\psi(\cdot)u(t) \right),\phi_n \right>, \ 0\leq n \leq N.
\end{array}
\end{equation}
In particular, we approximate the projections of the semilinearity $g(t,x,w(x,t)+\psi(x)u(t))$ onto $\left\{\phi_n \right\}_{n=0}^N$ by the projections of the approximate semilinearity $g(t,x,\hat{w}(x,t)+\psi(x)u(t))$ onto $\left\{\phi_n \right\}_{n=0}^N$.
Assume

\textbf{Assumption 1:} The point $x_*\in [0,1]$ satisfies
\begin{equation}\label{eq:SemiXstarassumpNoDel}
c_n= \phi_n(x_*) \neq 0,\quad 0\leq n\leq N_0.
\end{equation}
Note that Assumption 1 holds for the particular case of boundary measurements $x_*=0$ or $x^*=1$.

Denote
\begin{equation}\label{eq:SemiC0A0HeatDirNoDel}
\begin{array}{lll}
&\tilde{A}_0 = \operatorname{diag}\left\{-\mu,A_0 \right\}, \quad \tilde{B}_0 = \operatorname{col}\left\{1,B_0\right\}\\
&A_0 = \operatorname{diag}\left\{-\lambda_n\right\}_{n=0}^{N_0}, \quad B_0 = \operatorname{col}\left\{b_n\right\}_{n=0}^{N_0}\\
&C_0=\left[c_0,\dots,c_{N_0} \right],\quad  C_1 = \left[c_{N_0+1},\dots,c_{N} \right],\\
\end{array}
\end{equation}
 Under Assumption 1, the pair $(A_0,C_0)$ is observable by the Hautus lemma. Let $L_0 = \left\{l_n \right\}_{n=0}^{N_0} \in \mathbb{R}^{N_0+1}$ satisfy the Lyapunov inequality
\begin{equation}\label{eq:SemiGainsDesignLHeatDirNoDel}
P_{\text{o}}(A_0-L_0C_0)+(A_0-L_0C_0)^TP_{\text{o}} < -2\delta P_{\text{o}}
\end{equation}
with $0<P_{\text{o}}\in \mathbb{R}^{(N_0+1)\times (N_0+1)}$. We further choose the remaining gains as $l_n = 0, \ N_0+1\leq n\leq N$.

Similarly, by the Hautus lemma, the pair $(\tilde{A}_0, \tilde{B}_0)$ is controllable. Let $K_0\in \mathbb{R}^{1\times (N_0+2)}$ satisfy
\begin{equation}\label{eq:SemiGainsDesignKHeatDirNoDel}
\begin{aligned}
&P_{\text{c}}(\tilde{A}_0-\tilde{B}_0K_0)+(\tilde{A}_0-\tilde{B}_0K_0)^TP_{\text{c}} < -2\delta P_{\text{c}},
\end{aligned}
\end{equation}
with $0<P_{\text{c}}\in \mathbb{R}^{(N_0+2)\times (N_0+2)}$.
We propose the 
controller 
\begin{equation}\label{eq:ContDefNoDel}
\begin{array}{lll}
& v(t) = -K_0\hat{w}^{N_0}(t),
& \hat{w}^{N_0}(t) = \operatorname{col}\left\{u(t),\hat{w}_n(t) \right\}_{n=0}^{N_0}
\end{array}
\end{equation}
which is based on the finite-dimensional observer \eqref{eq:ObsNoDel}.

\subsection{Well-posedness of the closed-loop system}
For well-posedness of the closed-loop system \eqref{eq:SemiChangeVarsHeatDirNoDel}, \eqref{eq:ObsNoDel1} subject to the control law \eqref{eq:ContDefNoDel}, consider the operator
\begin{equation*}
	\begin{array}{lll}
		&\hspace{-5mm}\mathcal{A}:\mathcal{D}(\mathcal{A})\to L^2(0,1),\ \mathcal{A} = -\partial_{xx},\\
		&\hspace{-5mm} \mathcal{D}(\mathcal{A}) = \left\{h\in H^2(0,1) \ | \ h'(0)=h'(1)=0\right\}.
	\end{array}
\end{equation*}
Let $\theta>0$ and $\mathcal{A}_{\theta}=\mathcal{A}+\theta I$. Given $h\in \mathcal{D}(\mathcal{A}_{\theta})=\mathcal{D}(\mathcal{A})$, integration by parts gives $\left<\mathcal{A}_{\theta}h,h\right> = \left\|h' \right\|^2+\theta \left\|h\right\|^2$. Hence, $\left<\mathcal{A}_{\theta}h,h\right>>0$. Since $-\mathcal{A}_{\theta}$ is diagonalizable, by Section 2.6 in \cite{GeorgeBook}, the spectrum of $-\mathcal{A}_{\theta}$ is given by $\sigma\left(-\mathcal{A}_{\theta} \right) = \left\{-\lambda_n-\theta \right\}_{n=0}^{\infty}\subset (-\infty,0)$. Thus, $\left\{\mu \in \mathbb{C} \ | \text{Re}(\mu )>0\right\}\subseteq \rho \left(-\mathcal{A}_{\theta} \right)$, where $\rho \left(-\mathcal{A}_{\theta} \right)$ is the resolvent set of $-\mathcal{A}_{\theta}$. By \cite{GeorgeBook},  $-\mathcal{A}_{\theta}$  generates
an analytic semigroup on $L^2(0,1)$. Moreover, by Section 3.4 in \cite{GeorgeBook} and positivity of $\mathcal{A}_{\theta}$, there exists a unique positive root $\mathcal{A}^{\frac{1}{2}}_{\theta}$ where $\mathcal{D}\left(\mathcal{A}^{\frac{1}{2}}_{\theta} \right)\subseteq L^2(0,1)$ is the completion of $\mathcal{D}\left(\mathcal{A}_{\theta} \right)\subseteq L^2(0,1)$ with respect to the norm $\left\|h\right\|_{\frac{1}{2}}=\sqrt{\left<\mathcal{A}_{\theta}h,h\right>}=\sqrt{\left\|h' \right\|^2+\theta \left\|h\right\|^2}$. Hence, $\mathcal{D}\left(\mathcal{A}^{\frac{1}{2}}_{\theta} \right) =H^1(0,1)$. Let $\mathcal{H} = L^2(0,1)\times \mathbb{R}^{N+2}$ be a Hilbert space with the norm $\left\|\cdot\right\|_{\mathcal{H}}^2:=\left\| \cdot\right\|^2+\left|\cdot \right|^2$. Introducing the state
\begin{equation}\label{eq:AbsStatNoDel}
	\begin{array}{lll}
		&\xi(t) = \text{col}\left\{\xi_1(t),\xi_2(t)\right\} , \ \xi_1(t)=w(\cdot,t), \ \xi_2(t) = \hat{w}^N(t),\\
		&\hat{w}^N(t) = \operatorname{col}\left\{u(t),\hat{w}_0(t),\dots,\hat{w}_N(t) \right\}
	\end{array}
\end{equation}
the closed-loop system can be presented as
\begin{equation}\label{eq:AbstractODENoDel}
	\begin{array}{lll}
		&\frac{d\xi }{dt}(t)+\operatorname{diag}\left\{\mathcal{A}_{\theta},\mathcal{B}\right\}\xi(t)=\begin{bmatrix}f_1(\xi)\\f_2(\xi)\end{bmatrix},\\
		&\mathcal{D}\left(\mathcal{B} \right)=\mathbb{R}^{N+2}, \ \mathcal{B}a = \scriptsize\begin{bmatrix}
		-\tilde{A}_0+\tilde{B}_0K_0+\tilde{L}_0[0\ C_0] & \tilde{L}_0C_1\\
		B_1K_0 & -A_1
		\end{bmatrix}a\\
		& f_1(t,\xi) = \theta w(\cdot,t)+g\left(t,\cdot,w(\cdot,t)+\psi(\cdot)u(t) \right)\\
		&\hspace{12mm}+\psi(\cdot)K_0\hat{w}^{N_0}(t),\\
		&f_2(t,\xi) = \operatorname{col}\left\{\hat{G}^{N_0}(t)+\tilde{L}_0w(x_*,t), \hat{G}^{N-N_0}(t)\right\}
	\end{array}
\end{equation}
where $-\mathcal{B}$ generates an analytic semigroup on $\mathcal{H}$ and
\begin{equation}\label{eq:AbstractODENoDel1}
	\begin{array}{lll}
&\hat{G}^{N_0}(t) = \operatorname{col}\left\{0,\hat{g}_n(t)\right\}_{n=0}^{N_0}, \\
&\hat{G}^{N-N_0}(t) = \operatorname{col}\left\{\hat{g}_n(t)\right\}_{n=N_0+1}^N , \ \tilde{L}_0 = \operatorname{col}\left\{0,l_n \right\}_{n=0}^{N_0}, \\
&A_1 = \operatorname{diag}\left\{-\lambda_{n}\right\}_{n=N_0+1}^N,\ B_1 = \operatorname{col}\left\{b_n\right\}_{n=N_0+1}^N.
\end{array}
\end{equation}
Let $\mathcal{G} = H^1(0,1)\times \mathbb{R}^{N+2}$ be a Hilbert space with the norm $\left\|\cdot\right\|_{\mathcal{G}}^2:=\left\| \cdot\right\|_{H^1}^2+\left|\cdot \right|^2$. Fix $(t,\xi)\in [0,\infty)\times \mathcal{G}$. Let $\mathcal{Q} = J\times B_{\mathcal{G}}(\xi,R)$ be a neighborhood of $(t,\xi)$, where $J$ is an interval and $B_{\mathcal{G}}(\xi,R)$ is a ball of radius $R>0$ around $\xi$. Let $(t_j,\varphi^{(j)})\in \mathcal{Q}, \ j\in \left\{1,2 \right\}$. Fixing $\Gamma=1$, by the Sobolev inequality, for any $j\in \left\{1,2 \right\}$ we have
\begin{equation}\label{eq:AbstractODENoDel11}
\begin{array}{lll}
&\max_{x\in [0,1]}\left|\varphi^{(j)}_1(x)\right|^2\overset{\eqref{eq:Sob}}{\leq} 2\left\|\varphi^{(j)}_1 \right\|_{H^1}^2\leq 2\left(R + \left\|\xi_1 \right\|_{H^1} \right)^2,\\
&\max_{x\in [0,1]}\left|[\psi(x)\ 0]\varphi^{(j)}_2\right|^2\leq \left\|\psi(x) \right\|_{\infty}^2\left(R+ \left|\xi_2 \right| \right)^2.
\end{array}
\end{equation}
Hence, for some $R_1(\xi)>0$ we have for $j\in \left\{1,2 \right\}$ that $\max_{x\in [0,1]}\left|\varphi^{(j)}_1(x)-[\psi(x)\ 0]\varphi^{(j)}_2\right|\leq R_1(\xi)$. Let $\mathcal{S} = \operatorname{cl}\left(J\right)\times [0,1]\times [-R_1(\xi),R_1(\xi)]\subseteq \mathbb{R}^3$. By assumption, $g$ is locally Lipschitz. Denote by $L_\mathcal{S}$ it's Lipschitz constant on $\mathcal{S}$. Then, we obtain
\begin{equation}\label{eq:AbstractODENoDel1111}
\begin{array}{lll}
&\left\|g(t_1,\cdot,\varphi_1^{(1)}(\cdot)+[\psi(\cdot) \ 0]\varphi^{(1)}_2)\right.\\
&\hspace{10mm}\left. -g(t_2,\cdot,\varphi_1^{(2)}(\cdot)+[\psi(\cdot) \ 0]\varphi^{(2)}_2) \right\|^2\\
&\leq 2L_S^2 \left(|t_1-t_2|^2+\left\|\varphi^{(1)}-\varphi^{(2)} \right\|^2_{\mathcal{G}}\right)
\end{array}
\end{equation}
From \eqref{eq:Sob}, \eqref{eq:AbstractODENoDel} and \eqref{eq:AbstractODENoDel1111} it easily follows that $f_1(t,\xi)$ and $f_2(t,\xi)$ satisfy assumption (F) in Thereom 6.3.1 in \cite{pazy1983semigroups}. Furthermore, by \eqref{eq:NonLinHeatVectfNoDel}, $f_1(t,\xi)$ and $f_2(t,\xi)$ also satisfy the conditions of Thereom 6.3.3 in \cite{pazy1983semigroups}. Hence, given $w(\cdot,0)\in H^1(0,1)$, the system \eqref{eq:AbstractODENoDel} has a unique classical solution satisfying
\begin{equation}\label{eq:xisolNoDel}
\begin{array}{lll}
&\xi \in C\left([0,\infty);\mathcal{H}\right)\cap C^1\left( (0,\infty);\mathcal{H}\right)
\end{array}
\end{equation}
such that
\begin{equation}\label{eq:xisolNoDel1}
\xi(t)\in \mathcal{D}\left(\operatorname{diag}\left\{\mathcal{A}_{\theta},\mathcal{B}\right\}\right)= \mathcal{D}\left(\mathcal{A}\right)\times \mathbb{R}^{N+2}\quad \forall t>0.
\end{equation}
\subsection{Stability analysis of the closed-loop system}
Introduce the estimation error $e_n(t) = w_n(t)-\hat{w}_n(t), \ 0\leq n\leq N_0$. Using the estimation error and $\left\{c_n\right\}_{n=0}^N$ in \eqref{eq:SemiC0A0HeatDirNoDel}, the innovation term in \eqref{eq:ObsNoDel1} can be presented as
\begin{equation}\label{eq:InnovNoDel}
\begin{array}{lll}
    &\hat{w}(x_*,t)+\psi(x_*)u(t) - y(t) = \hat{w}(x_*,t) - w(x_*,t)\\
    & = -\sum_{n=0}^Nc_ne_n(t) - \zeta(t),\\
    &\zeta(t) = w(0,t)-\sum_{n=0}^Nw_n(t)\phi_n.
\end{array}
\end{equation}
Let $\Gamma>0$. By Lemma \ref{lem_sob} we have
\begin{equation}\label{eq:ZetaEst}
\begin{array}{lll}
    &\zeta^2(t) \leq (1+\Gamma)\left\|w(\cdot,t)-\sum_{n=0}^Nw_n(t)\phi_n(\cdot)\right\|^2\\
    &\hspace{12mm}+ \Gamma^{-1}\left\|w_x(\cdot,t)-\sum_{n=0}^Nw_n(t)\phi'_n(\cdot) \right\|^2\\
    &\hspace{12mm}\overset{\eqref{eq:H2}}{=}\sum_{n=N+1}^{\infty}\kappa_nw_n^2(t), \ \kappa_n = 1+\Gamma +\Gamma^{-1}\lambda_n.
\end{array}
\end{equation}
Taking into account \eqref{eq:SemiWOdesHeatDirNoDel}, \eqref{eq:ObsNoDel1}, \eqref{eq:SemiC0A0HeatDirNoDel} and \eqref{eq:InnovNoDel}, the estimation error satisfies the following ODEs
\begin{equation}\label{eq:EstErrNoDel}
\begin{array}{lll}
&\dot{e}_n(t) = -\lambda_ne_n(t) + h_n(t)\\
&\hspace{10mm}- l_n\sum_{n=0}^Nc_ne_n(t)-l_n\zeta(t), \ 0\leq n \leq N_0,\\
&\dot{e}_n(t) =-\lambda_ne_n(t) + h_n(t), \ N_0+1\leq n\leq N.
\end{array}
\end{equation}
where we define
\begin{equation}\label{eq:EstErrNoDel1}
    h_n(t) = g_n(t) -\hat{g}_n(t), \quad n\geq 0.
\end{equation}

Recall \eqref{eq:SemiC0A0HeatDirNoDel}, \eqref{eq:AbstractODENoDel1} and denote
\begin{equation}\label{eq:SemiC0A0HeatDir1NoDel}
\begin{array}{lll}
&\hat{w}^{N-N_0}(t) = \operatorname{col}\left\{\hat{w}_{n}(t)\right\}_{n=N_0+1}^N,\\
&e^{N_0}(t) = \operatorname{col}\left\{e_{n}(t)\right\}_{n=0}^{N_0},\\ &e^{N-N_0}(t) = \operatorname{col}\left\{e_{n}(t)\right\}_{n=N_0+1}^{N},\\
&H^{N_0}(t) = \operatorname{col}\left\{h_n(t)\right\}_{n=0}^{N_0},  \\
&H^{N-N_0}(t) = \operatorname{col}\left\{h_n(t)\right\}_{n=N_0+1}^N ,\\
&X(t) = \operatorname{col}\left\{\hat{w}^{N_0}(t),e^{N_0}(t),\hat{w}^{N-N_0}(t),e^{N-N_0}(t) \right\},\\
&L_{\zeta} = \operatorname{col}\left\{\tilde{L}_0,-L_0,0,0 \right\}\in \mathbb{R}^{2N+3},		\\
&\hat{G}(t) = \operatorname{col}\left\{\hat{G}^{N_0}(t),0,\hat{G}^{N-N_0}(t),0 \right\},\\
&H(t) = \operatorname{col}\left\{0, H^{N_0}(t),0,H^{N-N_0}(t)  \right\},\\
& K_X = [K_0 , 0 , 0 ,0]\in \mathbb{R}^{1\times (2N+3))}.
\end{array}
\end{equation}
Then, using \eqref{eq:SemiWOdesHeatDirNoDel}, \eqref{eq:ObsNoDel1} - \eqref{eq:SemiC0A0HeatDirNoDel}, \eqref{eq:ContDefNoDel}, \eqref{eq:InnovNoDel}, \eqref{eq:EstErrNoDel} and \eqref{eq:SemiC0A0HeatDir1NoDel}, the closed-loop system for $t\geq 0$ can be presented as
\begin{equation}\label{eq:ClosedLoopNoDel}
\begin{array}{lll}
     &  \dot{X}(t) = F_XX(t)+L_{\zeta}\zeta(t)+\hat{G}(t)+H(t),\\
     &\dot{w}_n(t) = -\lambda_nw_n(t)+\hat{g}_n(t)+h_n(t)\\
     &\hspace{13mm}-b_nK_XX(t), \ n>N
\end{array}
\end{equation}
where
\begin{equation*}
F_X = \scriptsize \begin{bmatrix}
	\tilde{A}_0-\tilde{B}_0K_0 & \tilde{L}_0C_0 & 0& \tilde{L}_0C_1\\
	0 & A_0-L_0C_0 & 0 & -L_0C_1\\
	-B_1K_0 & 0 & A_1 & 0\\
	0 & 0 & 0 & A_1
\end{bmatrix}.
\end{equation*}
For $H^1$-stability analysis of the closed-loop system \eqref{eq:ClosedLoopNoDel} we consider the Lyapunov function
\begin{equation}\label{eq:Lyapunov}
V(t) = X^T(t)P_XX(t)+\sum_{n=N+1}^{\infty}\lambda_nw_n^2(t)
\end{equation}
where $0<P_X\in \mathbb{R}^{(2N+3)\times (2N+3)}$ to be obtained from LMIs. Differentiating $V(t)$ along the solution to the closed-loop system \eqref{eq:ClosedLoopNoDel} we have
\begin{equation}\label{eq: VdotNoDel}
\begin{array}{lll}
&\dot{V}+2\delta V = 2X^T(t)\left[P_XF_X+F_X^TP_X+2\delta P_X \right]X(t)\\
&+ 2X^T(t)P_XL_{\zeta}\zeta(t)+2X^T(t)P_X\hat{G}(t)+2X^T(t)P_XH(t)\\
&+2\sum_{n=N+1}^{\infty}\left(-\lambda_n^2+\delta \lambda_n \right)w_n^2(t)\\
&+ 2\sum_{n=N+1}^{\infty}\lambda_nw_n(t)\left[\hat{g}_n(t)+h_n(t)-b_NK_XX(t) \right].
\end{array}
\end{equation}
Let $\alpha_1>0$, we compensate the series with $\left\{\hat{g}_n(t) \right\}_{n=N+1}^{\infty}$ by using the Young inequality
\begin{equation}\label{eq: VdotNoDel1}
\begin{array}{lll}
&2\sum_{n=N+1}^{\infty}\lambda_nw_n(t)\hat{g}_n(t)\leq \frac{1}{\alpha_1}\sum_{n=N+1}\lambda_n^2w_n^2(t)\\
&-\alpha_1\left|\hat{G}(t) \right|^2+\alpha_1 \sum_{n=0}^{\infty}\hat{g}_n^2(t).
\end{array}
\end{equation}
Then, by Parseval's equality and \eqref{eq:NonLinHeatVectfNoDel} we obtain
\begin{equation}\label{eq: VdotNoDel2}
\begin{array}{lll}
&\alpha_1 \sum_{n=0}^{\infty}\hat{g}_n^2(t) = \alpha_1 \int_0^1|g(t,x,\hat{w}(x,t)+\psi(x)u(t))|^2 dx\\
&\overset{\eqref{eq:NonLinHeatVectfNoDel}}{\leq} \alpha_1\sigma^2\int_0^1|\hat{w}(x,t)+\psi(x)u(t)) |^2dx\\
&\leq 2\alpha_1\sigma^2\left\|\hat{w}(\cdot,t) \right\|^2 + 2\alpha_1 \sigma^2u^2(t)\left\|\psi \right\|^2\\
&= 2\alpha_1\sigma^2X^T(t)\Xi_XX(t),\\
&\Xi_X \overset{\eqref{eq:PsiProp}}{=} \operatorname{diag}\left\{\frac{2}{\pi^2}, I_{N_0+1},0,I_{N-N_0},0 \right\}.
\end{array}
\end{equation}
Similarly, introducing $\alpha_2>0$ we have
\begin{equation}\label{eq: VdotNoDel3}
\begin{array}{lll}
&2\sum_{n=N+1}^{\infty}\lambda_nw_n(t)h_n(t)\leq \frac{1}{\alpha_2}\sum_{n=N+1}\lambda_n^2w_n^2(t)\\
&-\alpha_2\left|H(t) \right|^2+\alpha_2 \sum_{n=0}^{\infty}h_n^2(t).
\end{array}
\end{equation}
Recall that
\begin{equation}\label{eq:eq: VdotNoDel31}
\begin{array}{lll}
&h_n = \left<g\left(t,\cdot,w(\cdot,t)+\psi(\cdot)u(t) \right),\phi_n\right>\\
&\hspace{15mm}-\left< g\left(t,\cdot,\hat{w}(\cdot,t)+\psi(\cdot)u(t) \right),\phi_n\right>, \ n\geq 0.
\end{array}
\end{equation}
Then, by Parseval's equality we obtain
\begin{equation}\label{eq: VdotNoDel4}
\begin{array}{lll}
&\alpha_2 \sum_{n=0}^{\infty}h_n^2(t)\overset{\eqref{eq:NonLinHeatVectfNoDel}}{\leq} \alpha_2\sigma^2\int_0^1|\hat{w}(x,t)-w(x,t) |^2dx\\
& = \alpha_2\sigma^2X^T(t)\Xi_EX(t)+\alpha_2\sigma^2\sum_{n=N+1}w_n^2(t),\\
&\Xi_E \overset{}{=} \operatorname{diag}\left\{0, I_{N_0},0,I_{N-N_0} \right\}\in \mathbb{R}^{(2N+3)\times(2N+3)}.
\end{array}
\end{equation}
We bound the last term in \eqref{eq: VdotNoDel}
by using Young's inequality with some $\alpha_3>0$:
\begin{equation}\label{eq: VdotNoDel5}
\begin{array}{lll}
&2\sum_{n=N+1}^{\infty} \lambda_nw_n(t)\left(-b_nK_XX(t) \right)\\
&\leq \frac{1}{\alpha_3}\sum_{n=N+1}^{\infty}\lambda_nw_n^2(t)+\alpha_3\left(\sum_{n=N+1}^{\infty}\lambda_nb_n^2 \right)\left|K_XX(t)\right|^2\\
&\overset{\eqref{eq:bSeriesNoDel}}{\leq}\frac{1}{\alpha_3}\sum_{n=N+1}^{\infty}\lambda_nw_n^2(t)+\frac{2\alpha_3\xi_{N+1}}{\pi^2}\left|K_XX(t)\right|^2.
\end{array}
\end{equation}
Finally, 
denoting for $n\ge N$
\begin{equation*}\label{eq:rho_n}\rho_{n} = \kappa_n^{-1}\left(-\lambda_n^2+\delta \lambda_n+\frac{\lambda_n}{2\alpha_3}+\frac{\lambda_n^2}{2\alpha_2}+\frac{\lambda_n^2}{2\alpha_2}+\frac{\alpha_2\sigma^2}{2} \right)\end{equation*}
and assuming that $\rho_{N+1}< 0$,
it can be seen that $\rho_n$ is monotonically decreasing. The latter follows from monotonicity of $\lambda_n$. Then
for the series terms in \eqref{eq: VdotNoDel} we have 
\begin{equation}\label{eq: VdotNoDel6}
\begin{array}{lll}
&\sum_{n=N+1}^{\infty} \left(-\lambda_n^2+\delta \lambda_n+\frac{\lambda_n}{2\alpha_3}+\frac{\lambda_n^2}{2\alpha_1}+\frac{\lambda_n^2}{2\alpha_2}+\frac{\alpha_2\sigma^2}{2} \right)w_n^2(t)\\
& \overset{}{=} \sum_{n=N+1}^{\infty}\rho_n\kappa_n w_n^2(t) 
\overset{\eqref{eq:ZetaEst}}{\leq} \rho_{N+1}\zeta^2(t).
\end{array}
\end{equation}

Let $\eta(t) = \operatorname{col}\left\{X(t),\zeta(t), \hat{G}(t), H(t) \right\}$. From \eqref{eq: VdotNoDel}-\eqref{eq: VdotNoDel6} 
we have
\begin{equation}\label{eq:VdotUpperBd}
\begin{array}{lll}
&\dot{V}+2\delta V \leq \eta^T(t)\Psi_0\eta(t)\leq 0
\end{array}
\end{equation}
provided
\begin{equation}\label{eq:PsiNoDel}
\begin{array}{lll}
&\Psi_0 = \scriptsize\left[
\begin{array}{c|c}
\begin{matrix}\psi_0 & P_XL_{\zeta}\\
* & 2\rho_{N+1}\end{matrix}& \begin{matrix}
P_X \quad & P_X\\
0 \quad & 0
\end{matrix}\\
\hline
* & \operatorname{diag}\left\{-\alpha_1 I, -\alpha_2I\right\}
\end{array}
\right]<0,\\
&\psi_0 = \scriptsize P_XF_X+F_X^TP_X+2\delta P_X+\frac{2\alpha_3\xi_{N+1}}{\pi^2}K_X^TK_X\\
&\hspace{8mm}+2\alpha_1\sigma^2\Xi_X +\alpha_2\sigma^2\Xi_E
\end{array}
\end{equation}

By Schur complement, it can be  seen that $\Psi<0$ is equivalent to the following LMI
\begin{equation}\label{eq:PsiNoDel1}
\begin{array}{lll}
&\scriptsize \left[
\begin{array}{c|c|c}
\begin{matrix}
\psi_0 & P_XL_{\zeta}\\
* & 2\bar{\rho}_{N+1}
\end{matrix}
& \begin{matrix}
P_X \quad & P_X\\
0 \quad & 0
\end{matrix}
& \Pi_1\\
\hline
* & \operatorname{diag}\left\{-\alpha_1 I, -\alpha_2I\right\} & 0\\
\hline
* & * & \Pi_2
\end{array}
\right]<0,\ \Pi_1 = \scriptsize \begin{bmatrix}
	0 \\ 1
\end{bmatrix}\begin{bmatrix}1 & 1 & 1 \end{bmatrix},\\
& \Pi_2 = -\frac{2\kappa_{N+1}}{\lambda_{N+1}}\operatorname{diag}\left\{\frac{\alpha_1}{\lambda_{N+1}},\frac{\alpha_2}{\lambda_{N+1}},\alpha_3 \right\},\\
&\bar{\rho}_{N+1} = 2\kappa_{N+1}^{-1}\left(-\lambda_{N+1}^2+\delta \lambda_{N+1}+\frac{\alpha_2\sigma^2}{2}\right)
\end{array}
\end{equation}
Summarizing, we arrive at
\begin{theorem}\label{Thm:SemilinearNoDel}
Consider the system \eqref{eq:SemiPDE1HeatNeuSampNoDel} with boundary conditions \eqref{eq:SemiBCsHeatDir1}, point measurement \eqref{eq:SemiInDomPointMeas1HeatDirNoDel} and control law \eqref{eq:ContDefNoDel}. Assume that $g(t,x,z)$ is a locally Lipschitz function satisfying $g(t,x,0)\equiv 0$ and \eqref{eq:NonLinHeatVectfNoDel} for a given $\sigma > 0$. Let $\delta>0$, $N_0\in \mathbb{N}$ satisfy \eqref{eq:SemiN0HeatDirNoDel} and $N\in \mathbb{N}$ satisfy $N_0\leq N$. Let $L_0$ and $K_0$ be obtained using \eqref{eq:SemiGainsDesignLHeatDirNoDel} and \eqref{eq:SemiGainsDesignKHeatDirNoDel}, respectively. Given $\Gamma>0$, let there exist $0<P\in \mathbb{R}^{(2N+3)\times (2N+3)}$ and scalars $\alpha_1,\alpha_2,\alpha_3>0$ such that \eqref{eq:PsiNoDel1} holds with $\psi_0$ given in \eqref{eq:PsiNoDel}. Then, given $w(\cdot,0)\in H^1(0,1)$, the solution $u(t),w(x,t)$ of \eqref{eq:SemiPDE1HeatNeuSampNoDel} subject to the control law \eqref{eq:ContDefNoDel} and the observer $\hat{w}(x,t)$ defined by \eqref{eq:ObsNoDel}-\eqref{eq:ObsNoDel2}, satisfy
\begin{equation}\label{eq:StabNoDel}
\begin{array}{lll}
&u^2(t)+\left\|w(\cdot,t) \right\|_{H^1}^2+ \left\|\hat{w}(\cdot,t) \right\|_{H^1}^2\leq D e^{-2\delta t}\left\|w(\cdot,0) \right\|_{H^1}^2
\end{array}
\end{equation}
for $t \geq 0$ and some $D\geq 1$. Moreover, the LMI \eqref{eq:PsiNoDel1} is always feasible for $N$ large enough and $\sigma>0$ small enough.
\end{theorem}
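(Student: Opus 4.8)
The plan is to split the argument into the $H^1$-stability estimate and the LMI feasibility claim.

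\emph{Stability estimate.} Assuming \eqref{eq:PsiNoDel1}, I would invoke its equivalence with $\Psi_0<0$ together with the already-derived bound \eqref{eq:VdotUpperBd}, so that $\dot V+2\delta V\le \eta^T\Psi_0\eta\le 0$ along classical solutions and hence $V(t)\le e^{-2\delta t}V(0)$ after integration. The remaining work is the two-sided comparison between $V$ in \eqref{eq:Lyapunov} and the left-hand side of \eqref{eq:StabNoDel}. For the upper comparison I would write $w_n=\hat w_n+e_n$ for $0\le n\le N$ and bound the finite modes of $\|w\|_{H^1}^2$ and $\|\hat w\|_{H^1}^2$, together with $u^2$, by a constant (depending on $N$ and $\lambda_{\min}(P_X)$) times $X^TP_XX$, while for $n>N$ I would use $1+\lambda_n\le 2\lambda_n$ to control the tail of $\|w\|_{H^1}^2$ by the series term of $V$; this gives $u^2+\|w\|_{H^1}^2+\|\hat w\|_{H^1}^2\le c_1 V$. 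For the initial value I would use the zero observer/controller data ($u(0)=0$, $\hat w_n(0)=0$, so $e_n(0)=w_n(0)$) together with $\lambda_{\max}(P_X)$ and Lemma \ref{lem:H1Equiv1} to obtain $V(0)\le c_2\|w(\cdot,0)\|_{H^1}^2$, yielding \eqref{eq:StabNoDel} with $D=c_1c_2$.

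\emph{Feasibility.} I would first set $\sigma=0$, which deletes $2\alpha_1\sigma^2\Xi_X+\alpha_2\sigma^2\Xi_E$ from $\psi_0$ and the $\sigma^2$ term from $\bar\rho_{N+1}$, and exhibit a feasible point for $N$ large. The natural Ansatz is a block-diagonal $P_X=\operatorname{diag}\{P_{\mathrm c},P_{\mathrm o},p_2I,p_3I\}$ with $P_{\mathrm c},P_{\mathrm o}$ solving the design inequalities \eqref{eq:SemiGainsDesignKHeatDirNoDel}, \eqref{eq:SemiGainsDesignLHeatDirNoDel} (solvable by the stated controllability of $(\tilde A_0,\tilde B_0)$ and observability of $(A_0,C_0)$) and scalars $p_2,p_3,\alpha_1,\alpha_2,\alpha_3>0$. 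With this choice the diagonal blocks of $P_XF_X+F_X^TP_X+2\delta P_X$ are negative: the $(1,1)$ and $(2,2)$ blocks by design with a strict margin, and the two $A_1$-blocks because $-\lambda_n<-\delta$ for $n>N_0$. As $N\to\infty$ the genuinely $N$-dependent pieces become harmless: $\xi_{N+1}=O(1/N)\to 0$ kills the $K_X^TK_X$ perturbation, and for $\alpha_1,\alpha_2$ large enough that $\tfrac{1}{2\alpha_1}+\tfrac{1}{2\alpha_2}<1$ one has $\rho_{N+1}=O(-\lambda_{N+1})\to-\infty$, so the Schur-complement correction from the $\zeta$-block, $-\tfrac{1}{2\rho_{N+1}}P_XL_\zeta L_\zeta^TP_X$, tends to zero (note $L_\zeta$ is $N$-independent), while the corrections from the $\hat G$- and $H$-blocks, of the form $(\alpha_1^{-1}+\alpha_2^{-1})P_X^2$, stay bounded and will be dominated by $2\delta P_X$ and the decay.

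\emph{Main obstacle.} The delicate point is dominating the off-diagonal couplings linking the fixed-dimensional blocks to the growing tail block, namely $\tilde L_0C_1$, $-L_0C_1$ and $B_1K_0$, since $|C_1|^2=\sum_{n=N_0+1}^{N}c_n^2$ grows like $N$ and a single Young inequality with an $N$-independent weight fails. The resolution I would use is a \emph{mode-wise} Young inequality: writing $s=(P_{\mathrm c}\tilde L_0)^T\hat w^{N_0}$, each cross term $2s\,c_ne_n$ is bounded by $\lambda_n^{-1}c_n^2 s^2+\lambda_ne_n^2$, after which summation uses $\sum_{n\ge1}c_n^2/\lambda_n\le\tfrac{2}{\pi^2}\sum_{n\ge1}n^{-2}<\infty$ (so the $s^2$-part is absorbed into the $(1,1)$ margin) and $\sum_n\lambda_ne_n^2$ is absorbed by the $(4,4)$ block $2p_3A_1$; the $-L_0C_1$ coupling is handled identically, and $B_1K_0$ is harmless since $\sum_n b_n^2<\infty$. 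This keeps $p_2,p_3$, hence $P_X$, bounded independently of $N$, so the quadratic $P_X^2$ corrections remain bounded and dominated. Assembling these estimates shows $\Psi_0<0$ for all $N$ large enough when $\sigma=0$. Finally, since the $\sigma$-dependent additions to $\psi_0$ and $\bar\rho_{N+1}$ are $O(\sigma^2)$ continuous perturbations and the inequality $\Psi_0<0$ is strict, the LMI remains feasible for $\sigma>0$ small, which completes the claim.
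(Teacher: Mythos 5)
Your proposal is correct and follows essentially the same route as the paper: feasibility of \eqref{eq:PsiNoDel1} gives $\dot V+2\delta V\le 0$ via \eqref{eq:VdotUpperBd}, the two-sided comparison between $V$ and $u^2+\|w\|_{H^1}^2+\|\hat w\|_{H^1}^2$ (using $u(0)=0$, zero observer data, and Lemma \ref{lem:H1Equiv1}) yields \eqref{eq:StabNoDel}, and feasibility is obtained by first treating $\sigma=0$ for large $N$ and then perturbing by continuity of eigenvalues in $\sigma$. The only difference is that the paper delegates the $\sigma=0$ feasibility step to Theorem 3.1 of \cite{RamiContructiveFiniteDim}, whereas you reconstruct that argument explicitly (block-diagonal $P_X$, mode-wise Young bounds exploiting $\sum_n c_n^2/\lambda_n<\infty$ to tame the $\tilde L_0C_1$, $L_0C_1$, $B_1K_0$ couplings, $\xi_{N+1}\to 0$ and $\rho_{N+1}\to-\infty$); this makes your write-up self-contained where the paper's is not, and your reconstruction is consistent with the cited proof.
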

\begin{proof}
Feasibility of \eqref{eq:PsiNoDel1} implies, by the comparison principle, that
$V(t)\leq e^{-2\delta t}V(0), \quad t\geq 0.$
Since $u(0) = 0$ (see \eqref{eq:SemiBCsHeatDir1}) we have
\begin{equation}\label{eq:Comp1NoDel1}
\begin{array}{lll}
&V(0)\leq \sigma_{\text{max}}(P_X)\left[w_0^2(0)+ \sum_{n=1}^{N}w_n^2(0)\right]\\
&+\sum_{n=N+1}^{\infty}\lambda_nw_n^2(0)\overset{\eqref{eq:H2}}{\leq} \operatorname{max}\left\{\sigma_{\text{max}}(P_X),1 \right\}\left\|w(\cdot,0) \right\|^2_{H^1}.
\end{array}
\end{equation}
Similarly for $t\geq0$
\begin{equation}\label{eq:Comp1NoDel2}
\begin{array}{lll}
&V(t)\overset{\eqref{eq:H2}}{\geq}\frac{1}{2}\operatorname{min}\left\{\frac{\sigma_{\text{min}}(P_X)}{\lambda_{N+1}},1 \right\}\left\|w(\cdot,t) \right\|^2_{H^1}.
\end{array}
\end{equation}
The estimate \eqref{eq:StabNoDel} now follows from \eqref{eq:Comp1NoDel1} and \eqref{eq:Comp1NoDel2}. Next, we treat feasibility of \eqref{eq:PsiNoDel1} for large enough $N$ and small enough $\sigma>0$. First, note that for $\sigma =0$ (i.e. when $g\equiv 0$ in \eqref{eq:SemilinearHeatNoDel}) arguments similar to proof of Theorem 3.1 in \cite{RamiContructiveFiniteDim} show feasibility of \eqref{eq:PsiNoDel1} for large enough $N$. Fixing such $N$ and using continuity of the eigenvalues of the matrix in \eqref{eq:PsiNoDel1} we find that \eqref{eq:PsiNoDel1} is feasible for small enough $\sigma>0$.
\end{proof}
\section{Finite-dimensional sequential sub-predictors for  semilinear heat equation}\label{sec_ObsContSemilinDel}
\subsection{Problem formulation}
In this section we consider stabilization of 
\eqref{eq:SemilinearHeatNoDel} under the point measurement \eqref{eq:SemiInDomPointMeasHeat} and subject to delayed Neumann actuation
\begin{equation}\label{eq:SemiBCsHeatDirdel}
z_x(0,t) = 0, \quad z_x(1,t) = u(t-r),\quad t\ge 0.
\end{equation}
Here $r>0$ is a known constant input delay and $u(t)=0$ for $t\le 0$.
 As in the previous section, $g(t,x,z)$ is a locally Lipschitz function satisfying $g(t,x,0)\equiv 0$ and \eqref{eq:NonLinHeatVectfNoDel} for some $\sigma>0$. We aim  to achieve $H^1$-stabilization of \eqref{eq:SemilinearHeatNoDel} in the presence of the input delay $r>0$ in \eqref{eq:SemiBCsHeatDirdel}.

Let $\psi(x) = -\frac{2}{\pi}\cos\left(\frac{\pi}{2}x\right)$ satisfy \eqref{eq:PsiProp} and \eqref{eq:PsiProp1}. To obtain homogeneous boundary conditions we employ the delayed change of variables
\begin{equation}\label{eq:SemiChangeVarsHeatDirDel}
	w(x,t)=z(x,t)-\psi(x)u(t-r),
\end{equation}
that leads to the following PDE
\begin{equation}\label{eq:SemiPDE1HeatNeuSampDel0}
	\begin{array}{lll}
		& w_t(x,t)=w_{xx}(x,t)+g\left(t,x,w(x,t)+\psi(x)u(t-r) \right)\\
		&\hspace{12mm} -\psi(x)\left[\mu u(t-r)+\dot{u}(t-r)\right]
	\end{array}
\end{equation}
As in the non-delayed case, we will construct an integral control law. In order to satisfy $u(t)=0, \ t\leq 0$ and to guarantee that $u(t)$ is continuously differentiable in $t\in \mathbb{R}$,
we consider
\begin{equation}\label{eq:SemiContFrom}
	\begin{array}{lll}
		u(t) = \int_{0}^{t}e^{-\mu (t-s)}v(s)ds, \quad t\in \mathbb{R}
	\end{array}
\end{equation}
where $v(t)$ will be constructed below as continuous subject to $v(t) = 0$ for $t\leq 0$. Then, $u(t)$ satisfies 
\begin{equation}\label{eq:SemiContFrom1}
\begin{array}{lll}
	\dot{u}(t) = -\mu u(t) +v(t), \ t\in \mathbb{R}.
\end{array}
\end{equation}
For our sub-predictor construction below, we would like the ODE for $u$ and the PDE for $w$ to contain the control input evaluated at the same time $t-r$ (see $w^{N_0}(t)$ and $w^{N-N_0}(t)$ in \eqref{eq:SemiStateModsDel} below). Hence, replacing $t$ by $t-r$ in \eqref{eq:SemiContFrom1} and substituting into \eqref{eq:SemiPDE1HeatNeuSampDel0} we obtain the following ODE-PDE system for $t\geq 0$
\begin{equation}\label{eq:SemiPDE1HeatNeuSampDel}
\begin{aligned}
& \dot{u}(t-r) = -\mu u(t-r) + v(t-r), \\
& w_t(x,t)=w_{xx}(x,t)+g\left(t,x,w(x,t)+\psi(x)u(t-r) \right)\\
&\hspace{12mm} -\psi(x)v(t-r)
\end{aligned}
\end{equation}
with the boundary conditions \eqref{eq:SemiBCsHeatDir1} and measurement
\begin{equation}\label{eq:SemiInDomPointMeas1HeatDirDel}
y(t) = w(x_*,t)+\psi(x_*)u(t-r).
\end{equation}
We will treat $u(t-r)$ as the additional (non-delayed) state variable and $v(t-r)$ as the new control input with delay $r$.

We present the solution to \eqref{eq:SemiPDE1HeatNeuSampDel} as \eqref{eq:SemiWseriesHeatDirNoDel}, with $\left\{\phi_n\right\}_{n=0}^{\infty}$ defined in \eqref{eq:SLBCs}. Similar to \eqref{eq:SemiWOdesHeatDirNoDel},
we obtain for $t\geq 0$
\begin{equation}\label{eq:SemiWOdesHeatDirDel}
\begin{array}{lll}
&\dot{w}_n(t) = -\lambda_nw_n(t) + g_n(t) +b_nv(t-r), \\
&w_n(0) = \left<w(\cdot,0),\phi_n\right>, \ n\in \mathbb{Z}_+
\end{array}
\end{equation}
where $\left\{b_n\right\}_{n=0}^{\infty}$ are given in \eqref{eq:gnDef} and
\begin{equation}\label{eq:gnDefDel}
\begin{array}{lll}
&g_n(t) = \left<g\left(t,\cdot,w(\cdot,t)+\psi(\cdot)u(t-r) \right),\phi_n \right>.
\end{array}
\end{equation}
Let $\delta>0$ be a desired decay rate and let $N_0 \in \mathbb{Z}_+$ satisfy \eqref{eq:SemiN0HeatDirNoDel} defining the number of modes in the controller.
Let $N\in \mathbb{Z}_+, \ N\geq N_0$ and introduce
\begin{equation}\label{eq:SemiC0A0HeatDir1Del}
\begin{array}{lll}
&w^{N_0}(t) = \operatorname{col}\left\{u(t-r),w_1(t),\dots, w_{N_0}(t)\right\},\\
&w^{N-N_0}(t) = \operatorname{col}\left\{w_{N_0+1}(t),\dots, w_N(t)\right\},\\
&G^{N_0}(t) = \operatorname{col}\left\{0,g_n(t)\right\}_{n=1}^{N_0} ,\\
&G^{N-N_0}(t) = \operatorname{col}\left\{g_n(t)\right\}_{n=N_0+1}^N.
\end{array}
\end{equation}
Then, recalling $A_1$ and $B_1$ in \eqref{eq:SemiC0A0HeatDir1NoDel} and using \eqref{eq:SemiWOdesHeatDirDel} we find that for $t\geq 0$ $w^{N_0}(t)$ and $w^{N-N_0}(t)$ satisfy
\begin{equation}\label{eq:SemiStateModsDel}
\begin{array}{lll}
&\dot{w}^{N_0}(t) = \tilde{A}_0w^{N_0}(t)+\tilde{B}_0v(t-r)+G^{N_0}(t),\\
&\dot{w}^{N-N_0}(t) = A_1w^{N-N_0}(t)+B_1v(t-r)+G^{N-N_0}(t).
\end{array}
\end{equation}

\subsection{Finite-dimensional observer-based controller design}
Consider the ODEs satisfied by $w^{N_0}(t)$, given in \eqref{eq:SemiStateModsDel}. In order to deal with the input delay $r>0$ therein, we fix $M\in \mathbb{N}$ and subdivide $r$ into $M$ parts of equal size $\frac{r}{M}$. We first consider $M\geq 2$ and design a chain of sub-predictors (observers of future state)
\begin{equation}\label{eq:SemiChainSubPred}
\begin{array}{lll}
&\hat{w}^{j}_1(t-r)\mapsto \cdots\mapsto\hat{w}^{j}_i\left(t-\frac{M-i+1}{M}r\right)\mapsto\cdots\\
&\mapsto\hat{w}^{j}_{M}\left(t-\frac{1}{M}r\right)\mapsto {w}^{j}(t), \quad j\in \left\{N_0, N-N_0\right\}.
\end{array}
\end{equation}
Here  $\hat{w}^{j}_i\left(t-\frac{M-i+1}{M}r\right)\mapsto\hat{w}^{j}_{i+1}\left(t-\frac{M-i}{M}r\right)$ means that $\hat{w}^{j}_i(t)$ predicts  the value of $\hat{w}^{j}_{i+1}(t+\frac{r}{M})$. Similarly, $\hat{w}^{j}_M(t)$ predicts the value of $w^{j}(t+\frac{r}{M})$.
\begin{remark}
Differently from the linear case \cite{katz2021sub}, here the sub-predictors are constructed for both $w^{N_0}(t)$ and $w^{N-N_0}(t)$. This is due to  semilinearity in \eqref{eq:SemilinearHeatNoDel}, which leads to coupling between all modes of the solution.
\end{remark}
\vspace{-0.1cm}
We assume the following:
\newline
\textbf{Assumption 2:} The point $x_*\in [0,1]$ satisfies \eqref{eq:SemiXstarassumpNoDel} and
	$\psi(x_*)\neq 0.$

Note that Assumption 2 holds for the particular case $x_*=0$ of non-collocated measurement. Recall the notations in \eqref{eq:SemiC0A0HeatDirNoDel} and let
\begin{equation}\label{eq:tildeC_0}
	\tilde{C}_0 = [\psi(x_*),C_0].
\end{equation}
 Under Assumption 2, the pair $(\tilde{A}_0,\tilde{C}_0)$ is observable by the Hautus lemma. Let $L_0 \in \mathbb{R}^{N_0+2}$ satisfy the Lyapunov inequality \eqref{eq:SemiGainsDesignLHeatDirNoDel} with $0<P_{\text{o}}\in \mathbb{R}^{(N_0+2)\times (N_0+2)}$ and $A_0,C_0$ replaced by $\tilde{A}_0,\tilde{C}_0$, respectively. We further choose the remaining gains as $l_n = 0, \ N_0+1\leq n\leq N$. Similarly, by the Hautus lemma, the pair $(\tilde{A}_0, \tilde{B}_0)$ is controllable. Let $K_0\in \mathbb{R}^{1\times (N_0+2)}$ satisfy \eqref{eq:SemiGainsDesignKHeatDirNoDel} with $0<P_{\text{c}}\in \mathbb{R}^{(N_0+2)\times (N_0+2)}$.

For $0\leq n \leq N$ and $1\leq i\leq M$ denote
\begin{equation}\label{eq:gi}
\begin{array}{lll}
 \hat{g}^{(i)}_n(t) =\\
  \left<g\left(t+\frac{(M+1-i)r}{M},\cdot \ ,Q(\cdot)\operatorname{col}\left\{\hat{w}^{N_0}_i(t),\hat{w}^{N-N_0}_i(t)\right\}\right), \phi_n \right>,\\
Q^T(x) = \operatorname{col}\left\{\psi(x),\phi_0(x),...,\phi_N(x) \right\},\\
\hat{G}^{N_0}_i(t) = \operatorname{col}\left\{0,\hat{g}^{(i)}_n(t)\right\}_{n=0}^{N_0},\\
\hat{G}^{N-N_0}_i(t) = \operatorname{col}\left\{\hat{g}^{(i)}_n(t)\right\}_{n=N_0+1}^{N}. 
\end{array}
\end{equation}
The sub-predictors satisfy the following ODEs for $t\geq 0$
\begin{equation}\label{eq:SemiSubpredODEsDel}
\begin{array}{lll}
&\dot{\hat{w}}^{N_0}_M(t) = \tilde{A}_0\hat{w}^{N_0}_M(t)+\tilde{B}_0v\left(t-\frac{M-1}{M}r\right)+\hat{G}^{N_0}_M(t)\\
&\hspace{5mm} -L_0\left[\tilde{C}_0\hat{w}^{N_0}_M(t-\frac{r}{M})+ C_1 \hat{w}^{N-N_0}_M(t-\frac{r}{M})-y(t)\right]\\
&\dot{\hat{w}}^{N-N_0}_M(t) = A_1\hat{w}^{N-N_0}_M(t)+B_1v\left(t-\frac{M-1}{M}r\right)\\
&\hspace{20mm}+\hat{G}^{N-N_0}_M(t),\\
&\dot{\hat{w}}^{N_0}_i(t) = \tilde{A}_0\hat{w}^{N_0}_i(t)+\tilde{B}_0v\left(t-\frac{i-1}{M}r\right)+\hat{G}^{N_0}_i(t)\\
&\hspace{5mm} -L_0\left[\tilde{C}_0\hat{w}^{N_0}_i(t-\frac{r}{M})+ C_1 \hat{w}^{N-N_0}_i(t-\frac{r}{M})\right.\\
&\hspace{13mm}\left.-\tilde{C}_0\hat{w}^{N_0}_{i+1}(t)- C_1 \hat{w}^{N-N_0}_{i+1}(t)\right],\\
&\dot{\hat{w}}^{N-N_0}_i(t) = A_1\hat{w}^{N-N_0}_i(t)+B_1v\left(t-\frac{i-1}{M}r\right)\\
&\hspace{20mm}+\hat{G}^{N-N_0}_i(t),\quad  1\leq i \leq M-1
\end{array}
\end{equation}
subject to
\begin{equation}\label{eq:SemiSubpredODEsDel00}
\hat{w}^{N_0}_i(t) = 0, \ \hat{w}^{N-N_0}_i(t) = 0, \ \ 1\leq i \leq M, \ t\leq 0.
\end{equation}
Note that as $i$ decreases, the input delay on the right-hand-side of the ODEs in \eqref{eq:SemiSubpredODEsDel} decreases by $\frac{r}{M}$. For the case $M=1$, the ODEs have the following form
\begin{equation}\label{eq:SemiSubpredODEs2Del}
\begin{array}{lll}
&\dot{\hat{w}}^{N_0}_1(t) = \tilde{A}_0\hat{w}^{N_0}_1(t)+\tilde{B}_0v\left(t-r\right)+\hat{G}^{N_0}_1(t)\\
&\hspace{5mm} -L_0\left[\tilde{C}_0\hat{w}^{N_0}_1(t-r)+ C_1 \hat{w}^{N-N_0}_1(t-r)-y(t)\right]\\
&\dot{\hat{w}}^{N-N_0}_1(t) = A_1\hat{w}^{N-N_0}_1(t)+B_1v\left(t-r\right)+\hat{G}^{N-N_0}_1(t).
\end{array}
\end{equation}

The finite-dimensional observer $\hat{w}(x,t)$ of the state $w(x,t)$, based on the $M\times (N+2)$ dimensional system of ODEs \eqref{eq:SemiSubpredODEsDel} is then given by
\begin{equation}\label{eq:SemiWhatSeries0SubDel}
\begin{array}{lll}
&\hat{w}(x,t)= \hat{w}^{N_0}_1(t-r)\cdot \text{col}\left\{0,\phi_n(x)\right\}_{n=0}^{N_0}\\
&\hspace{11mm}+\hat{w}_1^{N-N_0}(t-r)\cdot\text{col}\left\{\phi_{n}(x)\right\}_{n=N_0+1}^{N}.
\end{array}
\end{equation}
The controller is further chosen as
\begin{equation}\label{eq:SemiControllerSubDel}
v(t)=-K_0{\hat{w}}^{N_0}_1(t).
\end{equation}
In particular, \eqref{eq:SemiSubpredODEsDel} and \eqref{eq:SemiSubpredODEsDel00} imply continuity of $v(t)$ and $v(t)=0$ for $t\leq 0$.

Well-posedness of the closed-loop system \eqref{eq:SemiPDE1HeatNeuSampDel}, \eqref{eq:SemiSubpredODEsDel} subject to the control law \eqref{eq:SemiControllerSubDel} follows from arguments similar to
\eqref{eq:AbsStatNoDel}-\eqref{eq:xisolNoDel1} combined with the step method, meaning proof of well-posedness step by step on the intervals $[\frac{jr}{M},\frac{(j+1)r}{M}), \ j=0,1,\dots$ (see Section A of \cite{katz2021sub}, where such arguments have been used for sub-predictors). In particular, given $w(\cdot,0)\in H^1(0,1)$ we obtain a unique classical solution satisfying $w(\cdot,t)\in C\left([0,\infty);L^2(0,1)\right)\cap C^1\left( (0,\infty);L^2(0,1)\setminus \mathcal{J}\right)$ with $\mathcal{J} = \left\{\frac{jr}{M} \right\}_{j=0}^{\infty}$. Furthermore, $w(\cdot,t)\in \mathcal{D}\left(\mathcal{A} \right)$ for all $t>0$. We omit the details due to space constraints.

We define the estimation errors as follows
\begin{equation}\label{eq:SemiEstErrSubDel}
\begin{array}{lll}
&e^{N_0}_M(t) = w^{N_0}(t)-\hat{w}^{N_0}_M(t-\frac{r}{M}),\\
&e^{N-N_0}_M(t) = w^{N-N_0}(t)-\hat{w}^{N-N_0}_M(t-\frac{r}{M}), \\
&e^{N_0}_i(t) = \hat{w}_{i+1}^{N_0}(t-\frac{M-i}{M}r)-\hat{w}^{N_0}_i(t-\frac{M-i+1}{M}r),\\
&e^{N-N_0}_i(t) = \hat{w}_{i+1}^{N-N_0}(t-\frac{M-i}{M}r)\\
&\hspace{12mm}-\hat{w}^{N-N_0}_i(t-\frac{M-i+1}{M}r), \ 1\leq i \leq M-1.
\end{array}
\end{equation}
Then, the innovation term on the right-hand-side of the ODEs for $\hat{w}^{N_0}_M(t)$ given in \eqref{eq:SemiSubpredODEsDel} can be presented as
\begin{equation}\label{eq:SemiEstErrInnovDel}
\begin{array}{lll}
&\tilde{C}_0\hat{w}^{N_0}_M(t-\frac{r}{M})+ C_1 \hat{w}^{N-N_0}_M(t-\frac{r}{M})-y(t)\\
&\overset{\eqref{eq:SemiInDomPointMeas1HeatDirDel}}{=}-\tilde{C}_0 e^{N_0}_M(t) - C_1e^{N-N_0}_M(t) - \zeta(t).
\end{array}
\end{equation}
Here, $\zeta(t)$ is given in \eqref{eq:InnovNoDel} and satisfies the estimate \eqref{eq:ZetaEst} with $\Gamma>0$. Furthermore, by \eqref{eq:SemiEstErrSubDel}, we have
\begin{equation}\label{eq:SemiNoDelayControlPresentDel}
\begin{array}{l}
\hat w_1^{N_0}(t-r)+\sum_{i=1}^Me^{N_0}_i(t)=w^{N_0}(t).
\end{array}
\end{equation}
In particular, if the errors $e^{N_0}_i(t), \ 1\leq i \leq M$ converge to zero, we have $\hat{w}^{N_0}_1(t)\rightarrow {w}^{N_0}(t+r)$, meaning that $\hat{w}_1^{N_0}(t)$ predicts the future system state $w^{N_0}(t+r)$.

Using \eqref{eq:SemiStateModsDel}, \eqref{eq:SemiSubpredODEsDel} and \eqref{eq:SemiEstErrInnovDel} we obtain the following dynamics of the estimation errors for $t\geq 0$
\begin{equation}\label{eq:SemiEstErrSub1Del0}
\begin{array}{lll}
&\hspace{-2mm}\dot{e}^{N_0}_M(t) = \left(\tilde{A}_0-L_0\tilde{C}_0 \right)e^{N_0}_M(t)-L_0C_1e^{N-N_0}_M(t)+L_0\tilde{C}_0\\
&\hspace{-2mm}\times\Upsilon^{N_0}_{M,r}(t)+L_0C_1\Upsilon^{N-N_0}_{M,r}(t)-L_0\zeta(t-\frac{r}{M})+H^{N_0}_M(t)\\
&\hspace{-2mm}\dot{e}^{N-N_0}_M(t) =A_1e^{N-N_0}_M(t)+H^{N-N_0}_M(t),\\
&\hspace{-2mm}\dot{e}^{N_0}_{M-1}(t) = \left(\tilde{A}_0-L_0\tilde{C}_0 \right)e^{N_0}_{M-1}(t)-L_0C_1e^{N-N_0}_{M-1}(t)\\
&\hspace{-2mm}+L_0\tilde{C}_0\Upsilon^{N_0}_{M-1,r}(t)+L_0C_1\Upsilon^{N-N_0}_{M-1,r}(t)+L_0\tilde{C}_0e^{N_0}_M(t)\\
&\hspace{-2mm}-L_0\tilde{C}_0\Upsilon^{N_0}_{M,r}(t)+L_0C_1e^{N-N_0}_M(t)-L_0C_1\Upsilon^{N-N_0}_{M,r}(t) \\
&\hspace{-2mm}+L_0\zeta(t-\frac{r}{M})+H^{N_0}_{M-1}(t),\\
&\hspace{-2mm}\dot{e}^{N-N_0}_{M-1}(t) = A_1e^{N-N_0}_{M-1}(t)+H^{N-N_0}_{M-1}(t)
\end{array}
\end{equation}
whereas for $1\leq i \leq M-2$
\begin{equation}\label{eq:SemiEstErrSub1Del}
	\begin{array}{lll}
		&\hspace{-2mm}\dot{e}^{N_0}_i(t) = (\tilde{A}_0-L_0\tilde{C}_0)e^{N_0}_i(t)-L_0C_1e^{N-N_0}_i(t)\\
		&\hspace{-2mm}+L_0\tilde{C}_0e^{N_0}_{i+1}(t)+L_0C_1e^{N-N_0}_{i+1}(t)+L_0\tilde{C}_0\Upsilon^{N_0}_{i,r}(t)\\
		&\hspace{-2mm}+L_0C_1\Upsilon^{N-N_0}_{i,r}(t)-L_0\tilde{C}_0\Upsilon^{N_0}_{i+1,r}(t)\\
		&\hspace{-2mm}-L_0C_1\Upsilon^{N-N_0}_{i+1,r}(t)+H^{N_0}_i(t),\\
		&\hspace{-2mm}\dot{e}^{N-N_0}_i(t) =A_1e^{N-N_0}_i(t)+H^{N-N_0}_i(t) .
	\end{array}
\end{equation}
Here
\begin{equation}\label{eq:SemiEstErrSub2Del}
\begin{array}{lll}
&\hspace{-2mm}\Upsilon^{N_0}_{i,r}(t) = e^{N_0}_i(t)-e^{N_0}_i(t-\frac{r}{M}),\\
&\hspace{-2mm}\Upsilon^{N-N_0}_{i,r}(t) = e^{N-N_0}_i(t)-e^{N-N_0}_i(t-\frac{r}{M}),\\
&\hspace{-2mm}H^{N_0}_M(t) = G^{N_0}(t)-\hat{G}^{N_0}_M(t-\frac{r}{M}),\\
&\hspace{-2mm}H^{N-N_0}_M(t) = G^{N-N_0}(t)-\hat{G}^{N-N_0}_M(t-\frac{r}{M}),\\
&\hspace{-2mm}H^{N_0}_i(t) = \hat{G}^{N_0}_{i+1}(t-\frac{M-i}{M}r)-\hat{G}^{N_0}_i(t-\frac{M-i+1}{M}r),\\
&\hspace{-2mm}H^{N-N_0}_i(t) = \hat{G}^{N-N_0}_{i+1}(t-\frac{M-i}{M}r)-\hat{G}^{N-N_0}_i(t-\frac{M-i+1}{M}r).
\end{array}
\end{equation}
From \eqref{eq:SemiStateModsDel}, \eqref{eq:SemiControllerSubDel} and \eqref{eq:SemiNoDelayControlPresentDel} we further have
\begin{equation}\label{eq:SemiStateMods1Del}
\begin{array}{lll}
&\dot{w}^{N_0}(t) = \left(\tilde{A}_0-\tilde{B}_0K_0 \right)w^{N_0}(t)+\tilde{B}_0K_0\sum_{i=1}^Me^{N_0}_i(t)\\
&\hspace{15mm}+G^{N_0}(t),\\
&\dot{w}^{N-N_0}(t) = A_1w^{N-N_0}(t)+B_1K_0\sum_{i=1}^Me^{N_0}_i(t)\\
&\hspace{16mm}+G^{N-N_0}(t).
\end{array}
\end{equation}
We introduce the notations
\begin{equation}\label{eq:SeminotationsDelayMSubNewDel0}
	\begin{array}{lll}
		&X(t) = \operatorname{col}\left\{w^{N_0}(t),w^{N-N_0}(t)\right\},\\
		&X_e(t) = \text{col}\left\{e^{N_0}_1(t),e^{N-N_0}_1(t),\dots,e^{N_0}_M(t),e^{N-N_0}_M(t) \right\},\\
& \Upsilon_{e,r}(t) = X_e(t)-X_e\left(t-\frac{r}{M}\right),\\
	&		H(t) = \text{col}\left\{H^{N_0}_1(t),H^{N-N_0}_1(t),\dots,H^{N_0}_M(t),H^{N-N_0}_M(t) \right\}
	\end{array}
\end{equation}
and
\begin{equation}\label{eq:SeminotationsDelayMSubNewDel}
\begin{array}{lll}
&G(t) = \operatorname{col}\left\{G^{N_0}(t), G^{N-N_0}(t)\right\},\\
	&F_X = \scriptsize\begin{bmatrix}
		\tilde{A}_0-\tilde{B}_0K_0 & 0 \\
		-B_1K_0 & A_1
	\end{bmatrix}, \ B_X = \operatorname{col}\left\{\tilde{B}_0, B_1\right\},\\
	&\mathcal{I} = [I_{N_0+2} \ 0 \ I_{N_0+2} \ 0 \dots I_{N_0+2} \ 0]\in \mathbb{R}^{1\times M(N+2)},\\
&F_0 = \scriptsize\begin{bmatrix}
		\tilde{A}_0-L_0\tilde{C}_0 & -L_0C_1\\
		0 & A_1
\end{bmatrix}, \ \mathcal{L}_0 = \begin{bmatrix}
		L_0 \\ 0
\end{bmatrix}, \ \mathcal{C} = [\tilde{C}_0 \ C_1],\\
&F_e = I_M \otimes F_0 + J_{0,M} \otimes \mathcal{L}_0\mathcal{C} ,\ \tilde{K}_0 = [K_0, 0_{1\times(N-N_0)}]\\
&\Lambda_e = I_M \otimes \mathcal{L}_0\mathcal{C} - J_{0,M} \otimes \mathcal{L}_0\mathcal{C} ,\\
&\mathcal{L}_{\zeta} = \operatorname{col} \left\{0, 0, \dots, 0, \mathcal{L}_0, -\mathcal{L}_0\right\}\in \mathbb{R}^{ M(N+2)}.
\end{array}
\end{equation}
Here $J_{0,M}$ is an upper triangular Jordan block of order $M$ with zero diagonal and $\otimes$ is the Kronecker product. Then, from \eqref{eq:SemiWOdesHeatDirDel}, \eqref{eq:SemiEstErrSub1Del}, \eqref{eq:SemiStateMods1Del} and \eqref{eq:SeminotationsDelayMSubNewDel} we obtain the following closed-loop system for $t\geq 0$
\begin{equation}\label{eq:SemiClosedLoopDel}
\begin{array}{lll}
&\dot{X}(t) =F_XX(t)+B_XK_0\mathcal{I}X_e(t)+G(t) ,\\
&\dot{X}_e(t) = F_eX_e(t)+\Lambda_e\Upsilon_{e,r}(t)+\mathcal{L}_{\zeta}\zeta(t-\frac{r}{M})+H(t),\\
&\dot{w}_n(t) = -\lambda_nw_n(t) + g_n(t)- b_n \tilde{K}_0X(t)\\
&\hspace{13mm}+b_nK_0 \mathcal{I}X_e(t) , \quad n>N.
\end{array}
\end{equation}
Differently from the existing finite-dimensional controllers \cite{RamiContructiveFiniteDim,katz2020constructiveDelay}, where the closed-loop systems is written in terms of the observer and the tail $w_n(t) \ (n>N)$,
here 
\eqref{eq:SemiClosedLoopDel} is presented in terms of the state $X(t)$, the estimation errors $X_e(t)$ and the tail. This allows to eliminate the delay $r$ from the ODEs of $X(t)$ and $w_n(t), \ n>N$ while decreasing it to  $\frac{r}{M}$ (which is small for large $M$) in the ODEs of $X_e(t)$.

\subsection{$H^1$- stability analysis}
For $H^1$-stability analysis of \eqref{eq:SemiClosedLoopDel} we define the Lyapunov functional
\begin{equation}\label{eq:SemiVComponents0NewDel}
\begin{array}{lll}
&V(t):=V_X(t)+V_{e}(t)+ V_{q}(t),\\
&V_X(t)=\left|X(t) \right|^2_{P_X}+\sum_{n=N+1}^{\infty}\lambda_nw^2_n(t),\\
&V_{q}(t)=q\int_{t-{r\over M}}^{t} e^{-2\delta(t-s)}\zeta^2(s) ds,\\
&V_e(t) = \left|X_e(t) \right|^2_{P_e}+ V_{S_e}(t)+V_{R_e}(t)
\end{array}
\end{equation}
Here $0<P_X$ and $0<P_e$ are matrices of appropriate dimensions, whereas $0<q$ is a scalar. Furthermore, $V_{S_e}(t)$ and $V_{R_e}(t)$ are given by
\begin{equation}\label{eq:VComponentsRNewDel}
\begin{array}{llll}
&V_{S_e}(t):=\int_{t-{r\over M}}^{t} e^{-2\delta(t-s)}\left| X_e(s)\right|_{S_e}^2 ds,\\
&V_{R_e}(t):= {r\over M} \int_{-{r\over M}}^{0} \int_{t+\theta}^t e^{-2\delta(t-s)} \left|\dot{X}_e(s)\right|^2_{R_e} ds d\theta
\end{array}
\end{equation}
where $0<S_e$ and $0<R_e$ are matrices of appropriate dimension. Note that $V_X(t)$ allows to compensate $\zeta(t)$ using \eqref{eq:ZetaEst}, $V_q(t)$ compensates $\zeta(t-\frac{r}{M})$ , whereas $V_e(t)$ compensate the delay $\frac{r}{M}$ appearing in the ODEs of $X_e(t)$.

Differentiation of $V_q(t)$ gives
\begin{equation}\label{eq:SemiVQDiffNewDel}
\dot{V}_q+2\delta V_q = q\zeta^2(t)-q\varepsilon_{r,M}\zeta^2\left(t-\frac{r}{M}\right), \ \varepsilon_{r,M} = e^{- \frac{2\delta r}{M}}.
\end{equation}
Differentiating $V_X(t)$ along the solution to \eqref{eq:SemiVComponents0NewDel} we obtain
\begin{equation}\label{eq:SemiVnomDelayNew}
\begin{array}{lll}
&\hspace{-2mm}\dot{V}_X+2\delta V_X =  X^T(t)\left[P_XF_X +F_X^TP_X+2\delta P_X \right]X(t)\\
&\hspace{-2mm}+2X^T(t)P_XB_XK_0\mathcal{I}X_e(t)+2X^T(t)P_XG(t)\\
&\hspace{-2mm}+2\sum_{n=N+1}^{\infty}\left(-\lambda_n^2+\delta \lambda_n \right)w_n^2(t)\\
&\hspace{-2mm}+2\sum_{n=N+1}^{\infty}\lambda_nw_n(t)\left[g_n(t)-b_n\left(\tilde{K}_0X(t)-K_0\mathcal{I}X_e(t)\right)\right].
\end{array}
\end{equation}
Let $\alpha_1>0$. By the Young inequality we have
\begin{equation}\label{eq:SemiCrosTermDel}
\begin{array}{lll}
&2\sum_{n=N+1}^{\infty}\lambda_nw_n(t)g_n(t)\\
&\leq \frac{1}{\alpha_1}\sum_{n=N+1}^{\infty}\lambda_n^2w_n^2(t) - \alpha_1\left|G(t)\right|^2 + \alpha_1\sum_{n=0}^{\infty}g_n^2(t).
\end{array}
\end{equation}
By Parseval's equality and the assumptions on $g$ we have
\begin{equation}\label{eq:SemiCrosTerm1Del}
\begin{array}{lll}
&\sum_{n=0}^{\infty}g_n^2(t) \overset{\eqref{eq:gnDefDel}}{=}\int_0^1\left|g\left(t,s,w(s,t)+\psi(s)u(t-r) \right)\right|^2ds\\
&\overset{\eqref{eq:NonLinHeatVectfNoDel}}{\leq} \sigma^2\int_0^1\left[w(s,t)+\psi(s)u(t-r) \right]^2ds\\
&\leq 2\sigma^2\int_0^1w^2(s,t)ds + 2u^2(t-r)\sigma^2\int_0^1\psi^2(s)ds\\
& = 2\sigma^2X^T(t)\Xi_X X(t)+2\sigma^2\sum_{n=N+1}^{\infty}w_n^2(t), \\
&\Xi_X \overset{\eqref{eq:PsiProp}}{=} \operatorname{diag}\left\{\frac{2}{\pi^2},I_{N+1}\right\}.
\end{array}
\end{equation}
Similarly, we have for $\alpha_2,\alpha_3>0$
\begin{equation}\label{eq:SemiCrosTerm2Del}
\begin{array}{lll}
&-2\sum_{n=N+1}^{\infty}\lambda_nw_n(t)b_n\tilde{K}_0X(t)\\
&\overset{\eqref{eq:bSeriesNoDel}}{\leq} \frac{1}{\alpha_2}\sum_{n=N+1}^{\infty}\lambda_nw_n^2(t)+\frac{2\alpha_2\xi_{N+1}}{\pi^2}\left|\tilde{K}_0X(t) \right|^2,\\
&2\sum_{n=N+1}^{\infty}\lambda_nw_n(t)b_nK_0\mathcal{I}X_e(t)\\
&\overset{\eqref{eq:bSeriesNoDel}}{\leq} \frac{1}{\alpha_3}\sum_{n=N+1}^{\infty}\lambda_nw_n^2(t)+\frac{2\alpha_3\xi_{N+1}}{\pi^2}\left|K_0\mathcal{I}X_e(t)\right|^2
\end{array}
\end{equation}

Differentiation of  $V_{e}(t)$ and Jensen's inequality lead to
\begin{equation}\label{eq:SemiCrossTerms4Del}
\begin{array}{lll}
&\hspace{-5mm}\dot{V}_{e}+2\delta V_{e} \leq X_e^T(t)\left[P_eF_e+F_e^TP_e+2\delta P_e \right]X_e(t)\\
&\hspace{-5mm}+2X_e^T(t)P_e\Lambda_e\Upsilon_{e,r}(t)+2X_e^T(t)P_e\mathcal{L}_{\zeta}\zeta(t-\frac{r}{M})\\
&\hspace{-5mm}+2X_e^T(t)P_eH(t)+\left| X_e(t)\right|_{S_e}^2-\varepsilon_{r,M}\times\\
&\hspace{-5mm}\left[\left|X_e(t)-\Upsilon_{e,r}(t) \right|_{S_e}^2+\left|\Upsilon_{e,r}(t) \right|_{R_e}^2\right]+\left(\frac{r}{M}\right)^2\left|\dot{X}_e(t) \right|_{R_e}^2.
\end{array}
\end{equation}

Recall $G^{N_0}(t), G^{N-N_0}(t)$ in \eqref{eq:SemiC0A0HeatDir1Del},  $\left\{\hat{G}_i^{N_0}(t), \hat{G}_i^{N-N_0}(t)\right\}_{i=1}^M$ in \eqref{eq:gi}, the estimation errors in \eqref{eq:SemiEstErrSubDel} and $H(t)$ defined in \eqref{eq:SemiEstErrSub2Del} and \eqref{eq:SeminotationsDelayMSubNewDel0}.  By Parseval's equality we have
\begin{equation*}\label{eq:SemiHestimateDel0}
	\begin{array}{lll}
&\left|H_M^{N_0}(t)\right|^2+\left|H_M^{N-N_0}(t)\right|^2 \\
&= \sum_{n=0}^{N}\left[g_n(t)-\hat{g}_n^{(M)}(t-\frac{r}{M})\right]^2\\
&\overset{\eqref{eq:SemiC0A0HeatDir1Del},\eqref{eq:gi}}{=}\int_0^1\left|g\left(t,s,w(s,t)+\psi(s)u(t-r)\right)\right.
	\end{array}
\end{equation*}
\begin{equation}\label{eq:SemiHestimateDel}
\begin{array}{lll}
& \left. -g\left(t,s,Q_1(s)\hat{w}^{N_0}_M(t-\frac{r}{M})+Q_2(s)\hat{w}^{N-N_0}_M(t-\frac{r}{M})\right) \right|^2ds\\
&\overset{\eqref{eq:NonLinHeatVectfNoDel}}{\leq}\sigma^2 \int_0^1\left[w(s,t)+\psi(s)u(t-r)-Q_1(s)\hat{w}^{N_0}_M(t-\frac{r}{M}) \right.\\
&\hspace{15mm}\left. - Q_2(s)\hat{w}^{N-N_0}_M(t-\frac{r}{M})\right]^2ds\\
&\leq 2\sigma^2e^{N_0,T}_M(t)\Xi_1e^{N_0}_M(t)+2\sigma^2\left|e^{N-N_0}_M(t) \right|^2\\
&\hspace{3mm}+2\sigma^2\sum_{n=N+1}^{\infty}w_n^2(t),\\
&\left|H_i^{N_0}(t)\right|^2+\left|H_i^{N-N_0}(t)\right|^2 \leq 2\sigma^2e^{N_0,T}_i(t)\Xi_1e^{N_0}_i(t)\\
&+2\sigma^2\left|e^{N-N_0}_i(t) \right|^2,\\
&\Xi_1 \overset{\eqref{eq:PsiProp}}{=} \left\{\frac{2}{\pi^2},I_{N_0+1}\right\}, \quad 1\leq i \leq M-1
\end{array}
\end{equation}
By \eqref{eq:SeminotationsDelayMSubNewDel0} and \eqref{eq:SeminotationsDelayMSubNewDel}, the latter implies
\begin{equation}\label{eq:SemiHestimate1Del}
\begin{array}{lll}
&\left|H(t)\right|^2 \leq 2\sigma^2 X_e^T(t)\Xi_EX_e(t) + 2\sigma^2 \sum_{n=N+1}^{\infty}w_n^2(t),\\
&\Xi_E = \operatorname{diag}\left\{\Xi_1,I_{N-N_0},\dots, \Xi_1,I_{N-N_0}\right\}.
\end{array}
\end{equation}
Let $\eta(t) = \operatorname{col}\left\{X(t),G(t),X_e(t),\zeta(t-\frac{r}{M}),\Upsilon_{e,r}(t),H(t) \right\}$. By \eqref{eq:SemiVQDiffNewDel}-\eqref{eq:SemiHestimate1Del} and the S-procedure, we have for $\beta>0$
\begin{equation}\label{eq:VestDel}
\begin{array}{lll}
&\dot{V}+2\delta V+\beta \left\{2\sigma^2 X_e^T(t)\Xi_EX_e(t)  \right.\\
&\hspace{5mm} \left.+2\sigma^2 \sum_{n=N+1}^{\infty}w_n^2(t)-\left|H(t)\right|^2\right\}\\
&\leq \eta^T(t)\Psi_1\eta(t) + q\zeta^2(t)+2\sum_{n=N+1}^{\infty}\varpi_nw_n^2(t)
\end{array}
\end{equation}
where
\begin{equation*}\label{eq:Vest2Del}
\begin{array}{lll}
&\varpi_n =\left(-1+\frac{1}{2\alpha_1}\right)\lambda_n^2+\left(\delta+\frac{1}{2\alpha_2}+\frac{1}{2\alpha_3}\right)\lambda_n\\
&\hspace{15mm}+\sigma^2\left(\alpha_1+\beta \right) , \ n>N,\\
		&\Psi_1 = \scriptsize \left[
		\begin{array}{c|c|c}
			\Phi_1 & \begin{matrix} P_XB_XK_0\mathcal{I} \ & 0  \ & 0\\
				0 \ & 0 \ & 0
			\end{matrix} &  \begin{matrix}
				0 \\  0
			\end{matrix}\\
			\hline
			* & \Phi_2 & \begin{matrix}
				P_e \\ 0 \\ 0
			\end{matrix}\\
			\hline
			* & * & -\beta I
		\end{array}
		\right]+\left(\frac{r}{M} \right)^2\Theta^TR_e\Theta,\\
		&\Phi_1 = \scriptsize \begin{bmatrix}
			\varphi_1 & P_X \\* & -\alpha_1 I
		\end{bmatrix}, \ \Phi_2 = \begin{bmatrix}
			\varphi_2 & P_e \mathcal{L}_{\zeta} & P_e \Lambda_e - \varepsilon_{r,M}S_e\\
			* & -q\varepsilon_{r,M} & 0\\
			* & * &-\varepsilon_{r,M}(S_e+R_e)
		\end{bmatrix},\\
		&\Theta = [0 , 0, F_e, \mathcal{L}_{\zeta}, \Lambda_e , I],\\
&\varphi_1 = P_XF_X +F_X^TP_X+2\delta P_X\\
&\hspace{8mm}+2\alpha_1\sigma^2\Xi_X +\frac{2\alpha_2\xi_{N+1}}{\pi^2}\tilde{K}_0^T\tilde{K}_0\\
&\varphi_2 = P_eF_e +F_e^TP_e+2\delta P_e+\frac{2\alpha_3\xi_{N+1}}{\pi^2}\mathcal{I}^TK_0^TK_0\mathcal{I}\\
&\hspace{8mm}+2\beta \sigma^2\Xi_E+\left(1-\varepsilon_{r,M} \right)S_e.
\end{array}
\end{equation*}
To compensate $\zeta^2(t)$ in \eqref{eq:VestDel} we use \eqref{eq:ZetaEst} and  monotonicity of $\left\{\lambda_n\right\}_{n=1}^{\infty}$ as follows
\begin{equation}\label{eq:Vest4Del}
\begin{array}{lll}
&q\zeta^2(t)+2\sum_{n=N+1}^{\infty}\varpi_nw_n^2(t)\\
&\overset{\eqref{eq:ZetaEst}}{\leq}\sum_{n=N+1}^{\infty}\left(2\varpi_{n}+q\kappa_{n}\right)w_n^2(t)\leq 0
\end{array}
\end{equation}
provided $\varpi_{N+1} +\frac{q\kappa_{N+1}}{2}<0$.
From \eqref{eq:VestDel} - \eqref{eq:Vest4Del} we have
\begin{equation}\label{eq:Vest5}
\begin{array}{lll}
&\dot{V}+2\delta V+\beta \left\{2\sigma^2 X_e^T(t)\Xi_2X_e(t)  \right.\\
&\hspace{5mm} \left.+2\sigma^2 \sum_{n=N+1}^{\infty}w_n^2(t)-\left|H(t)\right|^2\right\}\leq 0
\end{array}
\end{equation}
if
\begin{equation}\label{eq:Vest6}
\begin{array}{lll}
&\Psi_1<0, \quad \varpi_{N+1}+\frac{q\kappa_{N+1}}{2}<0.
\end{array}
\end{equation}
By Schur complement, we have that $\varpi_{N+1}+\frac{q\kappa_{N+1}}{2}<0$ iff
\begin{equation}\label{eq:Vest7Del}
\begin{array}{lll}
&\scriptsize \left[
\begin{array}{c|c}
\varphi_3 & \begin{matrix}
1 \qquad & 1 \qquad  & 1
\end{matrix}\\
\hline
* & -\frac{2}{\lambda_{N+1}}\operatorname{diag} \left\{\frac{\alpha_1}{\lambda_{N+1}},\alpha_2,\alpha_3\right\}
\end{array}
\right]<0,\\
&\varphi_3 = -\lambda^2_{N+1}+\left(\delta+\frac{q\Gamma}{2} \right)\lambda_{N+1}\\
&\hspace{15mm}+\sigma^2\left(\alpha_1+\beta \right)+\frac{q}{2}\left(1+\Gamma \right).
\end{array}
\end{equation}
Summarizing, we arrive at
\begin{theorem}\label{Thm:SemilinearDel}
Consider the system \eqref{eq:SemiPDE1HeatNeuSampDel} with boundary conditions \eqref{eq:SemiBCsHeatDir1}, point measurement \eqref{eq:SemiInDomPointMeas1HeatDirDel} and control law \eqref{eq:SemiControllerSubDel}. Assume that $g(t,x,z)$ is a locally Lipschitz function satisfying $g(t,x,0)\equiv 0$ and \eqref{eq:NonLinHeatVectfNoDel} for a given $\sigma > 0$. Let $\delta>0$, $N_0\in \mathbb{N}$ satisfy \eqref{eq:SemiN0HeatDirNoDel} and $N\in \mathbb{N}$ satisfy $N_0\leq N$. Let $L_0$ and $K_0$ be obtained using \eqref{eq:SemiGainsDesignLHeatDirNoDel} (with $A_0,C_0$ replaced by $\tilde{A}_0,\tilde{C}_0$) 
and \eqref{eq:SemiGainsDesignKHeatDirNoDel}, respectively. Given $M\in \mathbb{N}$ and $\Gamma>0$, let there exist positive definite matrices $P_X,P_e,S_e,R_e$ and scalars $q,\alpha_1,\alpha_2,\alpha_3,\beta>0$ such that \eqref{eq:Vest6} hold. Then, given $w(\cdot,0)\in H^1(0,1)$, the solution $u(t-r),w(x,t)$ of \eqref{eq:SemiPDE1HeatNeuSampDel} subject to the control law \eqref{eq:SemiControllerSubDel} and the observer $\hat{w}(x,t)$,  defined by \eqref{eq:SemiSubpredODEsDel} (with notations \eqref{eq:gi})
and \eqref{eq:SemiWhatSeries0SubDel}, satisfy
\begin{equation}\label{eq:StabDel}
\begin{array}{lll}
&u^2(t-r)+\left\|w(\cdot,t) \right\|_{H^1}^2\\
&\hspace{15mm}+ \left\|\hat{w}(\cdot,t) \right\|_{H^1}^2\leq D e^{-2\delta t}\left\|w(\cdot,0) \right\|_{H^1}^2
\end{array}
\end{equation}
for $ t\geq 0$ and some $D\geq 1$. Given $r>0$, \eqref{eq:Vest6} are always feasible for $M,N$ large enough and $\sigma>0$ small enough.
\end{theorem}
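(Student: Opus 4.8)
The plan is to mirror the two-part structure of the proof of Theorem \ref{Thm:SemilinearNoDel}: first derive the $H^1$-decay estimate from the Lyapunov functional \eqref{eq:SemiVComponents0NewDel}, then establish feasibility of \eqref{eq:Vest6} by a continuity argument anchored at the linear case $\sigma=0$.

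\textbf{Decay estimate.} First I would note that the S-procedure bound \eqref{eq:SemiHestimate1Del} makes the bracketed term in \eqref{eq:Vest5} nonnegative, so feasibility of \eqref{eq:Vest6} yields $\dot V+2\delta V\le 0$ along the classical solutions, and the comparison principle gives $V(t)\le e^{-2\delta t}V(0)$ for $t\ge 0$. To convert this into \eqref{eq:StabDel} I would bound $V(0)$ from above and $V(t)$ from below by $H^1$-norms, exactly as in \eqref{eq:Comp1NoDel1}--\eqref{eq:Comp1NoDel2}. The new feature is the history terms $V_q,V_{S_e},V_{R_e}$: since $v(t)=0$ and $\hat w^{N_0}_i(t)=\hat w^{N-N_0}_i(t)=0$ for $t\le 0$ by \eqref{eq:SemiSubpredODEsDel00}, the integrals over $[-\tfrac{r}{M},0]$ enter $V(0)$ only through $w(\cdot,0)$, giving $V(0)\le C_1\|w(\cdot,0)\|_{H^1}^2$. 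For the lower bound, the quadratic forms in $X(t),X_e(t)$ together with $\sum_{n>N}\lambda_n w_n^2(t)$ dominate $u^2(t-r)+\|w(\cdot,t)\|_{H^1}^2$ via \eqref{eq:H2}; the term $\|\hat w(\cdot,t)\|_{H^1}^2$ is handled using \eqref{eq:SemiNoDelayControlPresentDel}, which expresses $\hat w^{N_0}_1(t-r)$ through $X(t)$ and $X_e(t)$, so that $\hat w(\cdot,t)$ in \eqref{eq:SemiWhatSeries0SubDel} is a finite linear combination of components of $X(t),X_e(t)$ and is therefore controlled by $V(t)$.

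\textbf{Feasibility at $\sigma=0$.} Setting $\sigma=0$ decouples the conditions in \eqref{eq:Vest6}. The scalar tail condition \eqref{eq:Vest7Del} has $\varphi_3$ dominated by $-\lambda_{N+1}^2$, hence holds for all large $N$ once $q,\alpha_1,\alpha_2,\alpha_3$ are fixed. For $\Psi_1<0$ I would exploit that $F_X$ is Hurwitz with rate $\delta$ by the choice of $K_0$ in \eqref{eq:SemiGainsDesignKHeatDirNoDel}, while $F_e=I_M\otimes F_0+J_{0,M}\otimes\mathcal L_0\mathcal C$ is block upper-triangular with every diagonal block equal to the Hurwitz matrix $F_0=\tilde A_0-L_0\tilde C_0$, hence Hurwitz with rate $\delta$. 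As $M\to\infty$ one has $\varepsilon_{r,M}=e^{-2\delta r/M}\to 1$ and the Jensen remainder $(\tfrac{r}{M})^2\Theta^TR_e\Theta\to 0$, so $\Psi_1$ tends to a block-negative-definite limit; this is precisely the linear sub-predictor situation, and I would invoke the feasibility argument of \cite{katz2021sub} to fix $M$ and $N$ large enough that \eqref{eq:Vest6} holds strictly at $\sigma=0$.

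\textbf{Perturbation to small $\sigma>0$ and the main obstacle.} With $M,N$ so fixed, every $\sigma$-dependent entry in \eqref{eq:Vest6} (through $\Xi_X,\Xi_E,\varphi_3$ and the explicit $\sigma^2$ terms) depends continuously on $\sigma$, so the eigenvalue-continuity argument used at the end of Theorem \ref{Thm:SemilinearNoDel} shows that \eqref{eq:Vest6} stays feasible for all sufficiently small $\sigma>0$. The step I expect to be delicate is the $\sigma=0$, $M\to\infty$ limit: the error block of $\Psi_1$ grows in dimension with $M$, and the coupling $\Lambda_e=I_M\otimes\mathcal L_0\mathcal C-J_{0,M}\otimes\mathcal L_0\mathcal C$ together with $R_e$ intertwines the $M$ sub-predictor errors, so constructing certificates $P_e,S_e,R_e$ that remain uniformly negative definite as $M\to\infty$ is the crux; the triangular Kronecker structure of $F_e$ and $\Lambda_e$, which block-diagonalizes the limiting inequality, is what makes this go through.
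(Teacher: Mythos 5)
Your proposal is correct and follows essentially the same route as the paper: the decay estimate comes from $\dot V+2\delta V\le 0$ via the comparison principle together with $H^1$ upper/lower bounds on $V$ (the paper points back to \eqref{eq:Comp1NoDel1}--\eqref{eq:Comp1NoDel2}, and your handling of the history terms and of $\hat w$ via \eqref{eq:SemiNoDelayControlPresentDel} is exactly the intended filling-in), and feasibility is obtained by anchoring at $\sigma=0$ through Theorem 1 of \cite{katz2021sub} and then perturbing by continuity of eigenvalues. Your added discussion of the $M\to\infty$ structure of $\Psi_1$ is a correct gloss on what that cited theorem provides, not a different argument.
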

\begin{proof}
The upper bound \eqref{eq:StabDel} follows from arguments similar to \eqref{eq:Comp1NoDel1} and \eqref{eq:Comp1NoDel2} in Theorem \ref{Thm:SemilinearNoDel}. Next, we fix $r>0$ and treat feasibility of \eqref{eq:Vest6} for $M,N$ large enough and $\sigma>0$ small enough. For $\sigma=0$ (i.e. when $g\equiv 0$ in \eqref{eq:SemilinearHeatNoDel}), feasibility for large enough $M$ and $N$ follows from Theorem 1 in \cite{katz2021sub}. Fixing such $M$ and $N$ and using continuity of eigenvalues, we have that \eqref{eq:Vest6} are feasible provided $\sigma>0$ is small enough.
\end{proof}

\section{Numerical example}\label{sec_Examples}
Consider first \eqref{eq:SemilinearHeatNoDel} under Neumann actuation \eqref{eq:SemiBCsHeatDir} and boundary measurement \eqref{eq:SemiInDomPointMeasHeat}, where $x_*=0$. Recall that $g(t,x,z)$ is a locally Lipschitz function satisfying $g(t,x,0)\equiv 0$ and \eqref{eq:NonLinHeatVectfNoDel} for a given $\sigma > 0$. Let $\delta = 0.001$ be the desired decay rate and $N_0 = 0$. Let the gains $L_0$ and $K_0$ satisfy \eqref{eq:SemiGainsDesignLHeatDirNoDel} and \eqref{eq:SemiGainsDesignKHeatDirNoDel}, respectively. The gains are given by
\begin{equation*}
L_0 = 2.75, \ K_0 = \begin{bmatrix}-5.468 & 32.19 \end{bmatrix}.
\end{equation*}
Given $N\in \left\{4,5,\dots,9 \right\}$, the LMI of Theorem \ref{Thm:SemilinearNoDel} was verified using Matlab to obtain the largest value of $\sigma$ which preserves feasibility of the LMI. The results are presented in Table \ref{Tab:NoPred}.
\begin{table}
	\begin{center}
		\scalebox{0.9}{
			\begin{tabular}{|c|c|c|c|c|c|c|c|c|}
				\hline
				N & 3 & 4 & 5 &6 & 7 & 8 \\
				\hline
				$\sigma_{\text{max}}$ &0.39 & 0.47 & 0.59 & 0.64 & 0.76 & 0.83\\
				\hline
		\end{tabular}}
	\end{center}
	\caption{\label{Tab:NoPred} Theorem \ref{Thm:SemilinearNoDel}: Feasibility of LMI }
\end{table}
Next, consider \eqref{eq:SemilinearHeatNoDel} under Neumann actuation with constant input delay \eqref{eq:SemiBCsHeatDir} and boundary measurement \eqref{eq:SemiInDomPointMeasHeat}, where $x_*=0$. Let $\delta = 0.001$ be the desired decay rate, $\sigma = 0.5$ and $N_0 = 0$. Let the gains $L_0$ and $K_0$ be obtained using \eqref{eq:SemiGainsDesignLHeatDirNoDel} (with $C_0$ replaced by $\tilde{C}_0$ in \eqref{eq:tildeC_0}) and \eqref{eq:SemiGainsDesignKHeatDirNoDel}, respectively. The gains are given by
\begin{equation}\label{eq:GainsSim}
L_0 = \begin{bmatrix}7.33 &	1.01 \end{bmatrix}^T, \ K_0 = \begin{bmatrix}1.95 &	0.55 \end{bmatrix}.
\end{equation}
Given $M=2$ and $N\in \left\{4,5,6\right\}$, the LMIs of Theorem \ref{Thm:SemilinearDel} were verified to obtain the largest value of the input delay $r>0$ which preserves feasibility of the LMIs. The results are presented in Table \ref{Tab:SubPred}.
\begin{table}
	\begin{center}
		\scalebox{0.9}{
			\begin{tabular}{|c|c|c|c|}
				\hline
				N & 4 & 5 & 6  \\
				\hline
				$r_{\text{max}}$ &0.32 & 0.45 & 0.56 \\
				\hline
		\end{tabular}}
	\end{center}
	\caption{\label{Tab:SubPred} Theorem \ref{Thm:SemilinearDel}: Feasibility of LMIs ($\sigma = 0.5, \ M=2$) }
\end{table}

For simulations of the closed-loop system, consider \eqref{eq:SemilinearHeatNoDel} under Neumann actuation with constant input delay \eqref{eq:SemiBCsHeatDir}, boundary measurement \eqref{eq:SemiInDomPointMeasHeat} at $x_*=0$ and
\begin{equation*}
g(t,x,z) = \sigma\sin(t+3x+z).
\end{equation*}
We fix $\sigma = 0.5$, delay $r=0.32$, $N=4$ and $M=2$ subpredictors. Let the gains be given by \eqref{eq:GainsSim}. The ODE-PDE system \eqref{eq:SemiPDE1HeatNeuSampDel} and subpredictor ODEs \eqref{eq:SemiSubpredODEsDel} were simulated using the FTCS (Forward Time Centered Space) and Forward Euler finite-difference schemes, where the initial condition was chosen as
\begin{equation*}
w(x,0)= 8.5x(1-x), \quad x\in[0,1].
\end{equation*}
The simulation results are given in Figure \ref{fig:Sim} and confirm our theoretical analysis. Stability of the closed-loop system in simulation was preserved for $r = 0.63$, which implies that our approach is slightly conservative in this example.
\begin{figure}
	\centering
	\includegraphics[width=70mm,scale=0.11]{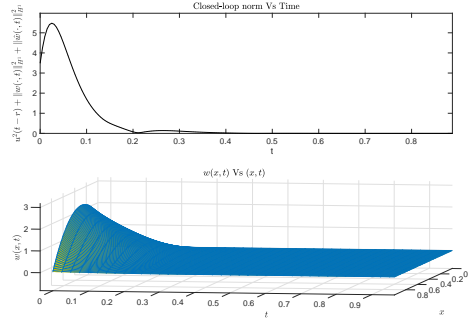}
	\caption{Closed-loop system simulation}\label{fig:Sim}
\end{figure}

\section{Conclusions}\label{sec_Concl}
In this paper we studied global boundary stabilization of a semilinear heat equation under point measurement. For the non-delayed case, we suggested a finite-dimensional nonlinear observer-based controller.
To compensate a constant input delay, we  constructed nonlinear sequential sub-predictors. A numerical example demonstrated the efficiency of the approach.
Our method in the future can be extended to other semilinear PDEs. 

\bibliographystyle{abbrv}
\bibliography{Bibliography250721}

\end{document}